\theoremstyle{plain}
\newtheorem{thm}{Theorem}[section]
\newtheorem{lemma}[thm]{Lemma}
\newtheorem{prop}[thm]{Proposition}
\newtheorem{cor}[thm]{Corollary}
\theoremstyle{definition}
\theoremstyle{remark}
\numberwithin{equation}{section}
\def \bF {\mathbb F}
\def \bQ {\mathbb Q}
\def \bZ {\mathbb Z}
\def \fp {\mathfrak p}
\begin{document}

\begin{center}
\Large
\textbf{Distinguishing eigenforms modulo a prime ideal}

\large
\begin{tabular}{cc}
Sam Chow & Alex Ghitza \\
University of Bristol & University of Melbourne
\end{tabular}

\end{center}

\section*{Abstract}
\setcounter{page}{1}

Consider the Fourier expansions of two elements of a given space of modular
forms. How many leading coefficients must agree in order to guarantee that the
two expansions are the same? Sturm~\cite{Sturm} gave an upper bound for
modular forms of a given weight and level. This was adapted by Ram
Murty~\cite{Murty}, Kohnen~\cite{Kohnen} and Ghitza~\cite{Ghitza} to the case of two eigenforms of
the same level but having potentially different weights. We consider their
expansions modulo a prime ideal, presenting a new bound. In the process of
analysing this bound, we generalise a result of Bach and Sorenson~\cite{Bach},
who provide a practical upper bound for the least prime in an arithmetic
progression.

\section*{Notation and terminology}
All modular forms discussed are of positive integer weight $k$ and level $N$. 
A modular form of weight $k$ 
and character $\chi$ for $\Gamma_0(N)$ satisfies
\begin{equation}
  f\left(\frac{az+b}{cz+d}\right)=\chi(d)(cz+d)^k f(z)\quad\text{for all }
  \begin{pmatrix}a&b\\c&d\end{pmatrix}\in \Gamma_0(N).
\end{equation}
By \emph{eigenform} we mean an eigenvector for the full Hecke algebra. If $f$ is a modular form then $a_n(f)$ denotes the $n$th Fourier coefficient: 
\begin{equation}
f(z) = \sum_n a_n(f) e^{2 \pi i nz}.
\end{equation}
The symbols $p$ and $\ell$ are reserved for prime numbers. A \emph{prime
  primitive root} modulo $p$ is a prime that is also a primitive root modulo
$p$. We write $f \sim g$ to mean that the ratio of the two functions tends to
1 in some limit, and define the equivalence relation $\sim$ analogously for
sequences. The Euler totient function is denoted by $\varphi$. By GRH we 
mean the generalisation of the Riemann hypothesis to Dirichlet $L$-functions.
If $a$ and $q$ 
are relatively prime positive integers and $x \ge 1$ is a real number, then 
$\pi_{a,q}(x)$ denotes the number of $\ell \le x$ such that 
$\ell \equiv a \mod q$. We use Landau `big O' notation in the standard way.

\section{Introduction}
\label{intro}

We present a new bound for the number of leading Fourier coefficients that one
needs to compare in order to distinguish two eigenforms, of potentially
different weights, modulo a prime ideal. Bounds of this flavour are of great
practical use in modular forms research, and have received much attention
(e.g. \cite{Murty}, \cite{Kohnen}, \cite{Ghitza}, \cite{GH}, \cite{Kowalski})
since the groundbreaking work of Sturm~\cite{Sturm}:

\begin{thm} [Sturm bound, see {\cite[Theorem 9.18]{Stein}}]
Let $f$ be a modular form of weight $k$ for a
congruence subgroup $\Gamma$ of index $i(\Gamma)$ inside $SL_2(\bZ)$. 
Let $R$ be the ring of integers of a number field, 
and assume that $R$ contains the
Fourier coefficients of $f$. Let $\fp$ be a prime ideal in $R$, 
and assume 
that $f \not \equiv 0 \mod \fp$.
Then there exists
\begin{equation} n \le \frac {k \cdot i(\Gamma)}{12}  \end{equation}
such that $a_n(f) \not \equiv 0 \mod \fp$.
\end{thm}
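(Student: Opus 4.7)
The plan is to reduce the problem from $\Gamma$ to the full modular group $SL_2(\bZ)$ via a multiplicative norm over cosets, and then to invoke the classical valence formula, which bounds the $q$-order at $\infty$ of a nonzero holomorphic weight-$w$ modular form on $SL_2(\bZ)$ by $w/12$. We argue by contradiction: assuming $a_n(f)\equiv 0\bmod\fp$ for every $n\le k\cdot i(\Gamma)/12$, we will deduce $f\equiv 0\bmod\fp$.

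Let $r=i(\Gamma)$ and fix right-coset representatives $\gamma_1=I,\gamma_2,\dots,\gamma_r$ for $\Gamma\backslash SL_2(\bZ)$. Form the multiplicative norm
\[
F:=\prod_{j=1}^{r} f|_k\gamma_j,
\]
which (absorbing any finite-order character of $SL_2(\bZ)$ by passing to a fixed power if necessary) is a modular form for $SL_2(\bZ)$ of weight $kr$, because right multiplication by any $\delta\in SL_2(\bZ)$ permutes the cosets $\Gamma\gamma_j$. Its Fourier coefficients lie in a ring $R'$ finite over $R$, and we extend $\fp$ to a prime $\fP$ of $R'$. The $q$-order of $F$ at $\infty$ is the sum of the $q$-orders of the factors $f|_k\gamma_j$; the $j=1$ factor contributes $\nu_\infty(f\bmod\fp)>kr/12$ by hypothesis, while every other factor contributes a nonnegative integer because $f$ is holomorphic at the cusp $\gamma_j^{-1}(\infty)$ of $\Gamma$. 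Hence $\nu_\infty(F\bmod\fP)>kr/12$, which would exceed the valence bound for a nonzero weight-$kr$ form on $SL_2(\bZ)$.

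The valence bound persists after reduction mod $\fP$ because the graded ring $\bigoplus_w M_w(SL_2(\bZ),\bZ)$ is generated by the Eisenstein series $E_4$ and $E_6$ with explicit integer Fourier expansions, forcing $F\equiv 0\bmod\fP$. Then some factor $f|_k\gamma_j\equiv 0\bmod\fP$; since $\gamma_j$ acts by a biholomorphism of the upper half plane, this yields $f\equiv 0\bmod\fP$, and restricting coefficients to $R$ gives the contradiction $f\equiv 0\bmod\fp$. The main obstacle is the descent of the valence bound into residue characteristics $2$ and $3$, where the elliptic points of $SL_2(\bZ)$ complicate the classical holomorphic argument and $E_4,E_6$ lose their algebraic independence modulo $\fP$; one handles this by working with the explicit monomial basis of $M_w(SL_2(\bZ),\bZ)$ in $E_4$ and $E_6$ and verifying directly that the $q$-expansion map remains injective modulo $\fP$ on the subspace of forms whose $q$-order at $\infty$ exceeds $\lfloor w/12\rfloor$.
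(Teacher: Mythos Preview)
The paper does not give its own proof of this statement; it is quoted from Stein's book and used as a black box (via the subsequent corollary). Your outline is precisely the classical argument found there and in Sturm's original paper: form the coset product $F=\prod_j f|_k\gamma_j$, a weight-$kr$ form for $SL_2(\bZ)$; observe that its $q$-order modulo $\fP$ exceeds $kr/12$; and use the explicit integral (Miller) basis of $M_{kr}(SL_2(\bZ),\bZ)$ to conclude $F\equiv 0\bmod\fP$, whence some factor vanishes modulo $\fP$. So there is no alternative approach to contrast with.

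One step in your write-up is not justified as stated. From $f|_k\gamma_j\equiv 0\bmod\fP$ you infer $f\equiv 0\bmod\fP$ ``since $\gamma_j$ acts by a biholomorphism of the upper half plane''. That is an analytic statement: it shows that $f|_k\gamma_j=0$ as a holomorphic function forces $f=0$, but it says nothing about congruences between formal $q$-expansions. What is actually needed is that the $\fP$-adic valuation of the $q$-expansion is the same at every cusp, equivalently that the slash action of $SL_2(\bZ)$ preserves $R'$-integrality of Fourier coefficients in both directions. This is an instance of the $q$-expansion principle (and is where one genuinely uses that $R'$ contains the relevant roots of unity). With that input your argument is complete; without it, the passage from ``some factor vanishes modulo $\fP$'' to ``$f$ vanishes modulo $\fp$'' is a real gap, not merely a cosmetic one.
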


We will use Buzzard's adaptation of the Sturm bound to modular forms with
character:
\begin{cor} [see {\cite[Corollary 9.20]{Stein}}] \label{BuzzardCor} 
Let $f$ and $g$ be modular forms of weight $k$ and character $\chi$ 
for $\Gamma_0(N)$. 
Let $R$ be the ring of integers of a number field, 
and assume that $R$ contains the
Fourier coefficients of $f$ and $g$.
Let $\fp$ be a prime ideal in $R$, 
and assume that $f \not \equiv g \mod \fp$. 
Then there exists
\begin{equation} n \le \frac k{12} [SL_2(\bZ):\Gamma_0(N)] \end{equation}
such that $a_n(f) \not \equiv a_n(g) \mod \fp$.
\end{cor}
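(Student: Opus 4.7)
The plan is to reduce the statement to Sturm's bound (Theorem~1.1) applied on the congruence subgroup $\Gamma_0(N)$, via an auxiliary form of trivial character. Set $h := f - g$; then $h$ is a weight $k$, character $\chi$ modular form for $\Gamma_0(N)$ with $h \not\equiv 0 \mod \fp$. Let $r$ be the order of $\chi$ and consider $h^r$. Because $\chi(d)^r = 1$ for every $\begin{pmatrix}a & b\\ c & d\end{pmatrix} \in \Gamma_0(N)$, one computes $h^r\bigl(\frac{az+b}{cz+d}\bigr) = \chi(d)^r(cz+d)^{rk} h(z)^r = (cz+d)^{rk} h(z)^r$, so $h^r$ is a modular form of weight $rk$ for the congruence subgroup $\Gamma_0(N)$, viewed simply as a subgroup of $SL_2(\bZ)$ with no character attached.

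I would then argue by contradiction. Write $B := \frac{k}{12}[SL_2(\bZ):\Gamma_0(N)]$, and suppose that $a_n(h) \equiv 0 \mod \fp$ for every $n \le B$. Reducing the Fourier expansion of $h$ modulo $\fp$ and raising to the $r$th power shows that $a_m(h^r) \equiv 0 \mod \fp$ for every $m \le rB$, since the leading nonzero term of $h \bmod \fp$ has index at least $B+1$, forcing the leading term of $h^r \bmod \fp$ to have index at least $r(B+1) > rB$. On the other hand, since $\fp$ is prime the quotient $R/\fp$ is an integral domain, so $h \not\equiv 0 \mod \fp$ forces $h^r \not\equiv 0 \mod \fp$. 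Applying Theorem~1.1 to $h^r$ on $\Gamma_0(N)$ then yields some $m \le \frac{rk}{12}[SL_2(\bZ):\Gamma_0(N)] = rB$ with $a_m(h^r) \not\equiv 0 \mod \fp$, contradicting the previous sentence. Hence some $n \le B$ must satisfy $a_n(f) \not\equiv a_n(g) \mod \fp$.

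The argument is essentially a packaging trick that converts a form with nebentypus into one of trivial character at the cost of multiplying the weight by $r$, a factor which cancels against the $r$-fold stretching of the order of vanishing. I do not anticipate any genuine obstacle: the only two points needing attention are the transformation law for $h^r$, which is immediate from $\chi^r$ being trivial, and the cleanness of order-of-vanishing under the $r$th power operation, which is precisely where primality of $\fp$ is invoked.
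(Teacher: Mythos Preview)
The paper does not give its own proof of this corollary; it simply cites \cite[Corollary 9.20]{Stein}. Your argument is correct and is exactly the standard proof (due to Buzzard, and reproduced in Stein's book): kill the nebentypus by raising $h=f-g$ to the $r$th power, then apply the Sturm bound on $\Gamma_0(N)$ at weight $rk$, noting that the $r$-fold increase in weight is exactly compensated by the $r$-fold stretching of the order of vanishing mod $\fp$.
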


Our research is strongly motivated by work of Ram Murty~\cite{Murty}:
\begin{lemma} [see {\cite[Lemma 2]{Ghitza}}] \label{MurtyLemma}
Let $f$ and $g$ be eigenforms of respective weights $k_1 \ne k_2$ for $\Gamma_0(N)$, and let $\ell$ be the least prime not
dividing $N$. Then there exists $n \le \ell^2$ such that $a_n(f) \ne a_n(g)$.
\end{lemma}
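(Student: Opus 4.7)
The plan is to argue by contradiction, using only the three Fourier coefficients $a_1$, $a_\ell$, and $a_{\ell^2}$. I would assume that $a_n(f) = a_n(g)$ for every $n \le \ell^2$, and aim to force $k_1 = k_2$.

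The key input is the Hecke recursion at $T_\ell$. Since $\ell$ is the least prime not dividing $N$, one has $\ell \nmid N$, so for any eigenform $h$ of weight $k$ for $\Gamma_0(N)$ the standard formula for the action of $T_\ell$ on Fourier expansions gives
\begin{equation}
a_n(T_\ell h) = a_{\ell n}(h) + \ell^{k-1} a_{n/\ell}(h),
\end{equation}
with the convention $a_{n/\ell}(h) := 0$ when $\ell \nmid n$. After rescaling so that $a_1(f) = a_1(g) = 1$, the $T_\ell$-eigenvalue of $f$ is simply $a_\ell(f)$, and evaluating the displayed identity at $n = \ell$ yields the quadratic relation
\begin{equation}
a_{\ell^2}(f) = a_\ell(f)^2 - \ell^{k_1 - 1},
\end{equation}
together with the same identity for $g$ but with $k_2$ in place of $k_1$. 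Under the contradiction hypothesis, both $a_\ell$ and $a_{\ell^2}$ agree on $f$ and $g$, so subtracting the two relations forces $\ell^{k_1 - 1} = \ell^{k_2 - 1}$, hence $k_1 = k_2$, contradicting the assumption that the weights differ.

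The only real obstacle is handling the normalisation. If $a_1(f) \ne a_1(g)$ then $n = 1$ already distinguishes the two forms; if the common value $a_1(f) = a_1(g)$ is nonzero then one may rescale both forms as above without altering the contradiction hypothesis; and the degenerate case $a_1 = 0$ would force $a_n(f) = 0$ for every $n$ coprime to $N$ by iterating the $T_\ell$-eigenvalue equation, so that subcase is easily handled separately. Beyond this bookkeeping, the argument is a one-line manipulation of the quadratic Hecke recursion at $n = \ell$ and requires no analytic input whatsoever.
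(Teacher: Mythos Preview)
Your proposal is correct and follows exactly the approach used in the paper (and in the cited reference): the paper does not give a separate proof of this lemma, but the identical Hecke recursion $\chi(\ell)\ell^{k-1}=a_\ell^2-a_{\ell^2}$ is invoked in the proof of Theorem~\ref{main} to the same end. One small remark: for eigenforms with nontrivial nebentypus the recursion carries a factor $\chi(\ell)$, but since $|\chi(\ell)|=1$ your conclusion $\ell^{k_1-1}=\ell^{k_2-1}$ still follows after taking absolute values.
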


Our main result concerns eigenforms modulo a prime ideal:

\begin{thm} \label{main}
Let $f$ and $g$ be normalised eigenforms for 
$\Gamma_0(N)$, with
character $\chi$ and respective weights $k_1 \le k_2$. 
Let $R$ be the ring of integers of a number field
containing the Fourier coefficients of $f$ and $g$,
and let $\fp$ be a nonzero prime ideal in $R$. 
Define $p$ by $p \bZ = \fp \cap \bZ$, assume that $p \ge 5$, 
and assume that $f \not \equiv g \mod \fp$. 
Then there exists
\begin{equation} \label{GhitzaBound}
n \le \max\left\{ g^*(p,N)^2, \frac{k_2}{12}[SL_2(\bZ):\Gamma_0(N)] \right\}
\end{equation}
such that $a_n(f) \ne a_n(g) \mod \fp$, where $g^*(p,N)$ 
is the least prime primitive root modulo $p$ that does not divide $N$.
\end{thm}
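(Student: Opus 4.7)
The plan is to split on whether $(p-1) \mid (k_2 - k_1)$. In the former case I would reduce to Buzzard's corollary (Corollary~\ref{BuzzardCor}), and in the latter I would run a Hecke-relation argument in the spirit of Murty's Lemma~\ref{MurtyLemma}, evaluated at the prime primitive root $g^*(p,N)$. Each case yields one of the two quantities inside the maximum in~\eqref{GhitzaBound}.

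Suppose first that $(p-1) \mid (k_2 - k_1)$ and set $m = (k_2 - k_1)/(p-1)$. Since $p \ge 5$, the normalised Eisenstein series $E_{p-1}$ is a modular form for $SL_2(\bZ)$ of weight $p-1$ and trivial character, and a classical computation (via von Staudt--Clausen) gives $E_{p-1} \equiv 1 \pmod{p}$. The form $F := f \cdot E_{p-1}^m$ therefore has weight $k_2$ and character $\chi$ for $\Gamma_0(N)$, and satisfies $F \equiv f \not\equiv g \pmod{\fp}$. Applying Corollary~\ref{BuzzardCor} to $F$ and $g$ produces $n \le \frac{k_2}{12}[SL_2(\bZ):\Gamma_0(N)]$ with $a_n(F) \not\equiv a_n(g) \pmod{\fp}$, and then $a_n(f) \equiv a_n(F) \not\equiv a_n(g) \pmod{\fp}$.

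Now suppose $(p-1) \nmid (k_2 - k_1)$, and put $\ell = g^*(p,N)$. Because $f$ and $g$ are normalised eigenforms and $\ell \nmid N$, the standard Hecke relation at $\ell$ gives
\begin{equation}
a_\ell(f)^2 - a_{\ell^2}(f) = \chi(\ell)\ell^{k_1-1}, \qquad a_\ell(g)^2 - a_{\ell^2}(g) = \chi(\ell)\ell^{k_2-1}.
\end{equation}
If both $a_\ell(f) \equiv a_\ell(g)$ and $a_{\ell^2}(f) \equiv a_{\ell^2}(g) \pmod{\fp}$, subtracting yields $\chi(\ell)\ell^{k_1-1}(\ell^{k_2-k_1} - 1) \equiv 0 \pmod{\fp}$. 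Now $\chi(\ell)$ is a root of unity in $R$, hence a unit, and $\ell \ne p$ (primitive roots modulo $p$ are coprime to $p$), so $\chi(\ell)\ell^{k_1-1} \notin \fp$. Therefore $\ell^{k_2-k_1} \equiv 1 \pmod{\fp}$; as this is a rational integer congruence, it forces $\ell^{k_2-k_1} \equiv 1 \pmod{p}$, and since $\ell$ has order $p-1$ in $(\bZ/p\bZ)^\times$ we conclude $(p-1) \mid (k_2 - k_1)$, a contradiction. Hence some $n \in \{\ell, \ell^2\}$ distinguishes $f$ and $g$ modulo $\fp$, and $n \le \ell^2 = g^*(p,N)^2$.

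The main obstacle is the algebraic bookkeeping in Case~B: one must work in the ring $R$ modulo a prime ideal $\fp$ lying over $p$ rather than over $\bZ$, and verify that both the character value $\chi(\ell)$ and the power $\ell^{k_1-1}$ are units modulo $\fp$ in order to cancel them. The Eisenstein series trick in Case~A is clean once $p \ge 5$ guarantees that $E_{p-1}$ is a genuine modular form; the existence of a prime primitive root modulo $p$ coprime to $N$ is implicit in the statement through the quantity $g^*(p,N)$, and the remainder of the paper is devoted to bounding it effectively.
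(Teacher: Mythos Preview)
Your proof is correct and follows essentially the same approach as the paper: the Hecke relation at the prime $\ell=g^*(p,N)$ either yields a distinguishing coefficient $\le \ell^2$ or forces $(p-1)\mid(k_2-k_1)$, and in the latter case multiplication by $E_{p-1}^{(k_2-k_1)/(p-1)}$ reduces to Corollary~\ref{BuzzardCor}. The only difference is organisational: you split upfront on whether $(p-1)\mid(k_2-k_1)$, whereas the paper first assumes the Hecke argument fails and deduces the divisibility, but the content is identical.
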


We note that Kohnen has obtained a similar result~\cite[Theorem 4]{Kohnen},
replacing $g^*(p,N)^2$ by the constant $900$ in~\eqref{GhitzaBound}, at the
expense of requiring $(N, 30)=1$ and only getting a bound for infinitely many
(rather than all) prime ideals $\fp$ of $R$.

Our argument can be modified to deal with the excluded cases $p=2$ and $p=3$,
yielding (slightly weaker) versions of Theorem~\ref{main}. We relegate these
special cases to Section~\ref{2and3}. In Section \ref{proof} we prove Theorem
\ref{main}. In Section \ref{asymptotics}, we provide asymptotics (as $N \to
\infty$) for the two quantities in the bound \eqref{GhitzaBound}, 
establishing that the second is asymptotically greater. In
Section \ref{practical} we determine how large $N$ has to be to ensure that
the second expression in \eqref{GhitzaBound} is indeed the larger of the two.
The crucial ingredient in Section \ref{practical} is our generalisation (see
Corollary \ref{dist}) of an explicit Linnik-type bound (see Theorem \ref{BS})
of Bach and Sorenson.

We thank James Withers for several fruitful discussions and observations. We 
thank M. Ram Murty and David Loeffler for some useful comments. The first 
author was supported by the Elizabeth and Vernon Puzey scholarship, and is 
grateful towards the University of Melbourne for their hospitality while 
preparing this memoir. The second author was supported by Discovery Grant
DP120101942 from the Australian Research Council.

\section{Proof of Theorem \ref{main}}
\label{proof}

Since $p -1 \ge 4$ is even, we may use the (appropriately normalised) Eisenstein series of weight $p-1$, which is the modular form for $SL_2(\bZ)$ given by
\begin{equation}
E_{p-1}(z) = 1- \frac{2p-2}{B_{p-1}} \sum_{n=1}^\infty \sigma_{p-2}(n)  e^{2 \pi i nz}, 
\end{equation}
where $B_{p-1}$ is the $(p-1)$st Bernoulli number (a rational number) and
$\sigma_{p-2}(n) = \sum_{d|n} d^{p-2}$; see~\cite[Subsection 2.1.2]{Stein}. 

If $k_1 = k_2$ then the result follows immediately from Corollary
\ref{BuzzardCor}, so henceforth assume that $k_1 < k_2$. Put $\ell =
g^*(p,N)$. By standard formulae (see~\cite[Proposition 5.8.5]{Diamond}), 
\begin{equation}\label{hecke} 
\chi(\ell) \ell^{k_1-1} = a_\ell(f)^2 - a_{\ell^2}(f) \qquad \text{and}
\qquad \chi(\ell) \ell^{k_2-1} = a_\ell(g)^2 - a_{\ell^2}(g). 
\end{equation}
We may assume that $a_\ell(f) \equiv a_\ell(g) \mod \fp$ and $a_{\ell^2}(f)
\equiv a_{\ell^2}(g) \mod \fp$, since otherwise the result is immediate.
As
$(\ell, N)=1$,
it follows from~\eqref{hecke} that
\begin{equation} 
  \ell^{k_1} - \ell^{k_2} \in \fp \cap \bZ = p\bZ.
\end{equation}
As $\ell$ is a primitive root modulo $p$, this implies that $p-1$ divides $k_2 - k_1$, so put
\begin{equation} r =  \frac{k_2-k_1}{p-1}\end{equation}
and $f^\prime = E_{p-1}^rf$. The von Staudt-Clausen theorem (see~\cite[Theorem 5.8.4]{BorevichShafarevich}) implies that $p$ divides the denominator of $B_{p-1}$, so 
\begin{equation} E_{p-1} \equiv 1 \mod p \end{equation}
as power series. Now $f^\prime \equiv f \mod pR$, so $f^\prime \equiv f \mod
\fp$. As $f^\prime$
is a modular form of weight $k_2$ and character $\chi$ for the congruence subgroup $\Gamma_0(N)$, the result now follows from Corollary \ref{BuzzardCor}.

\section{Asymptotics} 
\label{asymptotics}

We show that, of the two expressions in Theorem \ref{main}, the second is greater, providing that $N$ is sufficiently large. The key result in this section is:
\begin{thm} \label{key} Let $p \ge 5$. Then
\begin{equation}  \limsup_{N \to \infty} \frac{g^*(p,N)}{\log N} = \frac{p-1}{\varphi(p-1)}. \end{equation}
\end{thm}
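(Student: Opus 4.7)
The plan is to reduce the asymptotic to the distribution of prime primitive roots modulo $p$. Enumerate them in increasing order as $\ell_1 < \ell_2 < \cdots$ (this list is infinite by Dirichlet's theorem) and introduce the associated Chebyshev function
\begin{equation*}
\theta_p^*(x) = \sum_{\substack{\ell \le x \\ \ell \text{ a prime primitive root mod } p}} \log \ell.
\end{equation*}
The $\varphi(p-1)$ primitive root residues account for a fraction $\varphi(p-1)/(p-1)$ of the invertible classes modulo $p$, so applying the prime number theorem in arithmetic progressions to each class and summing yields
\begin{equation*}
\theta_p^*(x) \sim \frac{\varphi(p-1)}{p-1}\, x.
\end{equation*}
The same input applied to the counting function $\pi_p^*$ (or, equivalently, Abel summation) gives $\ell_n \sim \frac{p-1}{\varphi(p-1)}\, n \log n$, and in particular $\ell_{n+1}/\ell_n \to 1$. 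This is the only analytic ingredient the proof needs.

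For the upper bound, given $N$ let $m = m(N) \ge 0$ be maximal such that $\ell_1, \ldots, \ell_m$ all divide $N$, so that $g^*(p,N) = \ell_{m+1}$. Then $N \ge \ell_1 \cdots \ell_m$, hence $\log N \ge \theta_p^*(\ell_m)$, and therefore
\begin{equation*}
\frac{g^*(p,N)}{\log N} \;\le\; \frac{\ell_{m+1}}{\theta_p^*(\ell_m)} \;=\; \frac{\ell_{m+1}}{\ell_m}\cdot\frac{\ell_m}{\theta_p^*(\ell_m)}.
\end{equation*}
Along any sequence of $N$ for which $m(N)$ stays bounded, $g^*(p,N)$ is bounded while $\log N \to \infty$, so the ratio tends to $0$. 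Along any sequence with $m(N) \to \infty$, the two factors on the right tend to $1$ and $(p-1)/\varphi(p-1)$ respectively. Combining the two cases shows $\limsup_{N \to \infty} g^*(p,N)/\log N \le (p-1)/\varphi(p-1)$.

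For the matching lower bound I would exhibit an explicit test sequence: take $N_k = \ell_1 \ell_2 \cdots \ell_k$. Then $g^*(p,N_k) = \ell_{k+1}$ exactly and $\log N_k = \theta_p^*(\ell_k)$, so the ratio equals $\ell_{k+1}/\theta_p^*(\ell_k)$, which tends to $(p-1)/\varphi(p-1)$ by the same asymptotics. I expect the only real obstacle to be cosmetic, namely choosing the right auxiliary function: working with $\theta_p^*$ rather than $\pi_p^*$ matches $\log N$ to $\sum_i \log \ell_i$ with no logarithmic slack, after which the argument becomes careful bookkeeping on top of PNT in arithmetic progressions.
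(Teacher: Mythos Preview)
Your argument is correct and follows the same overall plan as the paper: both single out the products $N_t=\ell_1\cdots\ell_t$ as the extremal sequence, feed in the prime number theorem for arithmetic progressions summed over the primitive-root residue classes, and combine $\ell_{t+1}/\ell_t\to 1$ with an asymptotic for $\log N_t$. The difference is purely in the bookkeeping for that last asymptotic. The paper first proves $\ell_t\sim c\,t\log t$ and then recovers $\log N_t\sim \ell_t/c$ by a sandwich argument using Stirling's formula (its Proposition~\ref{intermediate}); your choice of the Chebyshev-type function $\theta_p^*$ gives $\log N_t=\theta_p^*(\ell_t)\sim \ell_t/c$ in one line and eliminates that detour. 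Likewise, the paper's upper-bound step introduces an explicit threshold and constants $C,M,K$, whereas your dichotomy ``$m(N)$ bounded versus $m(N)\to\infty$'' reaches the same conclusion more directly. In short, same strategy, but your use of $\theta_p^*$ is a genuine streamlining. (One cosmetic point: your displayed bound $\ell_{m+1}/\theta_p^*(\ell_m)$ is only meaningful for $m\ge 1$; the case $m=0$ is already absorbed into your ``$m(N)$ bounded'' branch, so nothing is lost.)
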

The group index $[SL_2(\bZ):\Gamma_0(N)]$ is classically known
(see~\cite[Exercise 1.2.3]{Diamond}):
\begin{equation} \label{ShimuraFormula}
  [SL_2(\bZ):\Gamma_0(N)] = N \prod_{\ell | N} \left(1+\frac{1}{\ell}\right). 
\end{equation}
In particular $[SL_2(\bZ):\Gamma_0(N)] \ge N$ which, upon proving Theorem \ref{key}, will verify the assertion made at the beginning of this section. 

We include the supremal asymptotics for $[SL_2(\bZ):\Gamma_0(N)]$ purely for interest's sake (this is proved in a similar vein to Theorem \ref{key}):
\begin{prop} \label{side}
\begin{equation}  \limsup_{N \to \infty} \frac{[SL_2(\bZ):\Gamma_0(N)]}{N \log \log N} = \frac{6e^\gamma}{\pi^2}, \end{equation}
where $\gamma$ is the Euler-Mascheroni constant. 
\end{prop}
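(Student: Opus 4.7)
The plan is to apply Shimura's formula~\eqref{ShimuraFormula} to reduce the claim to proving
\begin{equation*}
\limsup_{N \to \infty} \frac{\prod_{\ell | N}(1 + 1/\ell)}{\log \log N} = \frac{6 e^\gamma}{\pi^2}.
\end{equation*}
The crucial analytic input is Mertens' third theorem, $\prod_{\ell \le x}(1 - 1/\ell) \sim e^{-\gamma}/\log x$, together with the Euler product evaluation $\prod_{\ell}(1 - 1/\ell^2) = 1/\zeta(2) = 6/\pi^2$. Writing $1 + 1/\ell = (1 - 1/\ell^2)/(1 - 1/\ell)$ and multiplying over $\ell \le x$ then produces the Mertens-type asymptotic
\begin{equation*}
\prod_{\ell \le x}(1 + 1/\ell) \sim \frac{6 e^\gamma}{\pi^2} \log x.
\end{equation*}

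For the lower bound on the limsup, I would test the ratio along the sequence of primorials $N_k := \prod_{i=1}^{k} p_i$, where $p_i$ denotes the $i$th rational prime. The prime number theorem yields $\log N_k = \theta(p_k) \sim p_k$, hence $\log \log N_k \sim \log p_k$, and dividing this into the above asymptotic (specialised to $x = p_k$) shows that the ratio tends to $6 e^\gamma / \pi^2$ along the primorial subsequence.

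For the matching upper bound, the key observation is that primorials are essentially the extremal integers for the multiplicative function $\prod_{\ell | N}(1 + 1/\ell)$. More precisely, if $N$ has $k$ distinct prime divisors $\ell_1 < \cdots < \ell_k$, then $\ell_i \ge p_i$ for each $i$, so $\prod_{\ell | N}(1 + 1/\ell) \le \prod_{i=1}^{k}(1 + 1/p_i)$ while at the same time $N \ge N_k$ and thus $\log \log N \ge \log \log N_k$. Along any sequence of $N$ for which the number of prime divisors tends to infinity, the analysis from the previous paragraph now yields an upper bound of $6 e^\gamma / \pi^2$; along sequences where that count is bounded, the numerator stays bounded while the denominator diverges, so the ratio tends to $0$. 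The only remaining work is the elementary bookkeeping required to combine these two regimes, and I foresee no genuine obstacle.
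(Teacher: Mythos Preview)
Your argument is correct. The paper does not actually spell out a proof of this proposition, only remarking that it is ``proved in a similar vein to Theorem~\ref{key}''; your use of the primorials as the extremal (worst-case) sequence, Mertens' theorem for the limit along that subsequence, and the comparison $\prod_{\ell\mid N}(1+1/\ell)\le\prod_{i\le k}(1+1/p_i)$ with $N\ge N_k$ for the upper bound is precisely the analogue of the $N_t$-argument given there.
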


Our goal for the remainder of this section is to prove Theorem \ref{key}. For positive integers $t$, let $x_t$ be the $t$th smallest prime primitive root modulo $p$, and let $N_t = x_1 \cdots x_t$ (also put $N_0 =1$). The sequence $(N_t)$ is the worst case scenario: if $N$ is a positive integer then there exists $t \ge 0$ (defined by $g^*(p,N) = x_{t+1}$) such that $g^*(p,N_t) =g^*(p,N)$ and $N_t \le N$. Put
\begin{equation} c = \frac{p-1}{\varphi(p-1)} > 1.
\end{equation}

We will establish Theorem \ref{key} via the following:
\begin{prop} \label{intermediate}
\begin{equation}
\lim_{t \to \infty} \frac{x_t}{\log N_t} = c.
\end{equation}
\end{prop}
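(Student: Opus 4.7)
The plan is to combine the prime number theorem for arithmetic progressions with a routine summation of logarithms. The primitive roots modulo $p$ occupy exactly $\varphi(p-1)$ residue classes modulo $p$, all coprime to $p$; call this set $P \subseteq \{1, \dots, p-1\}$. Writing $\pi^*(x) := \sum_{a \in P} \pi_{a,p}(x)$ for the counting function of prime primitive roots modulo $p$, the PNT for arithmetic progressions gives $\pi_{a,p}(x) \sim \frac{1}{p-1}\cdot\frac{x}{\log x}$ for each fixed $a \in P$, and summing yields
\[
\pi^*(x) \sim \frac{\varphi(p-1)}{p-1}\cdot\frac{x}{\log x} = \frac{1}{c}\cdot\frac{x}{\log x} \qquad (x \to \infty).
\]

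Next I would invert this asymptotic to recover the size of $x_t$. By definition $\pi^*(x_t) = t$, so $t \sim x_t/(c\log x_t)$; taking logarithms forces $\log t \sim \log x_t$, and substituting back gives
\[
x_t \sim c\, t \log t.
\]

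It remains to estimate $\log N_t = \sum_{i=1}^t \log x_i$. The preceding step yields $\log x_i = \log i + \log\log i + O(1)$, so in particular $\log x_i / \log i \to 1$. By Stolz--Ces\`aro applied to the sequences $A_n = \sum_{i=1}^n \log x_i$ and $B_n = \sum_{i=1}^n \log i$ (strictly increasing and unbounded), we conclude $A_n/B_n \to 1$, and hence
\[
\log N_t \sim \sum_{i=1}^t \log i = \log(t!) \sim t \log t
\]
by Stirling. Dividing the two asymptotics gives $x_t/\log N_t \to c$, as required. The only substantive ingredient is the PNT for arithmetic progressions used at the outset; since $p$ is fixed throughout, no uniformity in $p$ is required and any classical version suffices. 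The remaining manipulations are bookkeeping, so I do not anticipate any step being a serious obstacle.
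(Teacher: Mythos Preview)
Your proof is correct and follows essentially the same outline as the paper: first obtain $x_t \sim c\,t\log t$ from the prime number theorem for arithmetic progressions (this is the paper's Lemma~\ref{asymp}), then show $\log N_t \sim t\log t$ via Stirling, and divide. The only cosmetic difference is that where the paper bounds $\log N_r$ between two explicit expressions $u_r$ and $v_r$ and applies a sandwich argument, you invoke Stolz--Ces\`aro on the successive differences $\log x_n/\log n \to 1$; this is a slightly cleaner packaging of the same idea.
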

This in turn is established by determining the asymptotics of the sequence $(x_t)$:
\begin{lemma} \label{asymp}
\begin{equation} x_t \sim ct \log t. \end{equation}
\end{lemma}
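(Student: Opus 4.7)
The plan is to count prime primitive roots up to $x$ and then invert. Let $\pi^*(x)$ denote the number of prime primitive roots modulo $p$ that are at most $x$, so that by definition $\pi^*(x_t) = t$ for all $t \ge 1$. Since we are working with a fixed modulus $p$, everything reduces to the prime number theorem for arithmetic progressions applied to a finite set of residue classes.

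First I would observe that the primitive roots modulo $p$ form a set $S \subset \{1, \ldots, p-1\}$ of size $\varphi(p-1)$, all automatically coprime to $p$. Hence
\begin{equation}
\pi^*(x) = \sum_{a \in S} \pi_{a,p}(x).
\end{equation}
Applying the prime number theorem for arithmetic progressions to each of the $\varphi(p-1)$ classes (with $p$ fixed, so no uniformity issues arise), we get
\begin{equation}
\pi^*(x) \sim \varphi(p-1) \cdot \frac{1}{\varphi(p)} \cdot \frac{x}{\log x} = \frac{\varphi(p-1)}{p-1} \cdot \frac{x}{\log x} = \frac{x}{c \log x}
\end{equation}
as $x \to \infty$.

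Next I would invert this asymptotic at $x = x_t$. From $\pi^*(x_t) = t$ we obtain $t \sim x_t / (c \log x_t)$, equivalently $x_t \sim c t \log x_t$. In particular $x_t \to \infty$, and $\log x_t = \log t + \log\log x_t + O(1)$. Since $x_t \le (c+1) t \log x_t$ for large $t$, we have $\log x_t = \log t + O(\log \log x_t) = \log t (1+o(1))$, so $\log x_t \sim \log t$. Substituting back yields $x_t \sim c t \log t$, as desired.

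The only delicate point is the application of the prime number theorem for arithmetic progressions, but since the modulus $p$ is fixed once and for all (independent of $t$ and $N$), the classical form of the theorem suffices, and no effective or uniform version is required. The rest is the routine asymptotic inversion exemplified by the derivation of $p_n \sim n \log n$ from $\pi(x) \sim x / \log x$.
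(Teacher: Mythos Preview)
Your argument is correct and follows essentially the same route as the paper: sum the prime number theorem for arithmetic progressions over the $\varphi(p-1)$ primitive-root residue classes to get $t \sim x_t/(c\log x_t)$, then take logarithms to obtain $\log x_t \sim \log t$ and substitute back. Your inversion step is written out slightly more carefully than in the paper, but the method is identical.
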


We require some basic results on asymptotic equivalence:
\begin{lemma} \label{basic}
\begin{enumerate}[(i)]
\item Let $(a_t)$ and $(b_t)$ be sequences of positive real numbers. Assume that $a_t \sim b_t$ and that $b_t \to \infty$ as $t \to \infty$. Then $\log a_t \sim \log b_t$. 
\item Let $(a_t), (b_t), (c_t)$, and $(d_t)$ be sequences of positive real numbers such that $a_t \sim c_t$ and $b_t \sim d_t$. Then $a_t+b_t \sim c_t + d_t$.
\end{enumerate}
\end{lemma}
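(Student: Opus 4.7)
The plan is to prove (i) and (ii) by reducing each to elementary statements about real sequences. Both results are standard, and the main "obstacle" is really just bookkeeping: making sure the hypotheses are used in the right places so that all the divisions and logarithms are legitimate.

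For part (i), I would start from the definition $a_t/b_t \to 1$ and use continuity of $\log$ at $1$ to conclude $\log(a_t/b_t) = \log a_t - \log b_t \to 0$. Since $b_t \to \infty$, we have $\log b_t \to \infty$, so dividing a null sequence by one diverging to infinity gives
\begin{equation}
\frac{\log a_t}{\log b_t} - 1 = \frac{\log a_t - \log b_t}{\log b_t} \longrightarrow 0,
\end{equation}
which is exactly $\log a_t \sim \log b_t$. (Note that positivity of $a_t$ and $b_t$ is needed to take logs, and eventual positivity of $\log b_t$ is needed to divide; both are guaranteed by the hypotheses.)

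For part (ii), my approach is to write $(a_t+b_t)/(c_t+d_t)$ as a convex combination of $a_t/c_t$ and $b_t/d_t$. Setting
\begin{equation}
\alpha_t = \frac{c_t}{c_t+d_t}, \qquad \beta_t = \frac{d_t}{c_t+d_t},
\end{equation}
we have $\alpha_t, \beta_t \in [0,1]$ with $\alpha_t+\beta_t = 1$, and
\begin{equation}
\frac{a_t+b_t}{c_t+d_t} = \alpha_t \cdot \frac{a_t}{c_t} + \beta_t \cdot \frac{b_t}{d_t}.
\end{equation}
Subtracting $1 = \alpha_t + \beta_t$ yields
\begin{equation}
\frac{a_t+b_t}{c_t+d_t} - 1 = \alpha_t\left(\frac{a_t}{c_t}-1\right) + \beta_t\left(\frac{b_t}{d_t}-1\right).
\end{equation}
Each bracketed term tends to zero by hypothesis, and the weights $\alpha_t, \beta_t$ are bounded by $1$, so the right-hand side tends to zero, establishing $a_t+b_t \sim c_t+d_t$.

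Neither argument presents a genuine difficulty; the only thing to be careful about is that all quantities involved are eventually positive, so that ratios and logarithms make sense, which follows from the standing assumption that the sequences consist of positive reals (together with $b_t \to \infty$ for part (i)).
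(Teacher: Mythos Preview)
Your proof is correct. The paper actually states this lemma without proof, treating both parts as standard facts about asymptotic equivalence; your argument supplies exactly the kind of routine verification the authors omitted, and there is nothing to compare against.
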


Armed with these tools, we prove Lemma~\ref{asymp},
Proposition~\ref{intermediate}, and Theorem~\ref{key}.
\begin{proof}[Proof of Lemma~\ref{asymp}]
We interpret $t$ as the number of prime primitive roots modulo $p$ that are less than or equal to $x_t$. Each of these lies in one of the $\varphi(p-1)$ primitive root residue classes, so summing the prime number theorem for arithmetic progressions over these residue classes yields
\begin{equation} \label{I} t \sim \frac{\varphi(p-1)}{p-1} \cdot \frac{x_t}{\log x_t},\end{equation}
so \begin{equation} \label{II} \log t \sim \log \frac{\varphi(p-1)}{p-1} + \log x_t - \log \log x_t \sim \log x_t. \end{equation}
Combining the equivalences \eqref{I} and \eqref{II} completes the proof.
\end{proof}

\begin{proof}[Proof of Proposition~\ref{intermediate}]
Fix $\varepsilon \in (0,c-1)$, and choose (by Lemma \ref{asymp}) a positive integer $T$ such that if $t > T$ then
\begin{equation} (c-\varepsilon)t \log t < x_t < (c+\varepsilon) t \log t. \end{equation}
Consider $r > T$, and define
\begin{equation}
u_r = \log(x_t \cdots x_T) + (r-T) \log (c-\varepsilon) + \log
\left(\prod_{t=T+1}^r t \right)+ \log \prod_{t=T+1}^r \log t
\end{equation}
and 
\begin{equation}
v_r = \log(x_t \cdots x_T) + (r-T) \log (c+\varepsilon) + \log
\left(\prod_{t=T+1}^r t \right)+ \log \prod_{t=T+1}^r \log t.
\end{equation}
Using Stirling's approximation and Lemma \ref{asymp},
\begin{equation}
u_r \sim r \log (c-\varepsilon) + \log (r!) \sim r \log(c-\varepsilon) + r \log r \sim r \log r \sim \frac1c x_r,
\end{equation}
and similarly $v_r \sim \frac1c x_r$. Since $u_r < \log N_r < v_r$, the result now follows from the sandwich rule.
\end{proof}

\begin{proof}[Proof of Theorem~\ref{key}] 
By Lemma \ref{asymp} and Proposition \ref{intermediate},
\begin{equation} 
\frac{g^*(p,N_t)}{\log N_t} = \frac{x_{t+1}}{\log N_t} = \frac{x_{t+1}}{x_t} \cdot \frac{x_t}{\log N_t} \sim 
\frac{c(t+1) \log(t+1)}{ct \log t} \cdot \frac{x_t}{\log N_t} \to c,
\end{equation}
so it remains to show that $\limsup_{N \to \infty} \frac{g^*(p,N)}{\log N} \le c$. Fix $\varepsilon >0$.  For each positive integer $N$, choose (by our `worst case scenario' property) $t_N \ge 0$ such that $g^*(p,N_{t_N}) = g^*(p,N)$ and $N_{t_N} \le N$. Choose a positive integer $C$ such that if $t \ge C$  then $\frac{g^*(p,N_t)}{\log N_t} \le c + \varepsilon$, define the real number 
\begin{equation} M = \sup_{t > 0} \frac{g^*(p,N_t)}{\log N_t}, \end{equation}
and put
\begin{equation} K = \exp \frac{M \log N_C}{c+\varepsilon}.
\end{equation}
Let $N \ge K$. If $t_N \ge C$ then
\begin{equation}
\frac{g^*(p,N)}{\log N} \le \frac{g^*(p,N_{t_N})}{\log N_{t_N}} \le c + \varepsilon,
\end{equation}
while if $t_N < C$ then
\begin{align}
\frac{g^*(p,N)}{\log N} &= \frac{\log N_{t_N}}{\log N} \cdot \frac{g^*(p,N_{t_N})}{\log N_{t_N}} < 
\frac{\log N_{C}}{\log K} \cdot \frac{g^*(p,N_{t_N})}{\log N_{t_N}} \\
&= \frac{c+\varepsilon}M \cdot \frac{g^*(p,N_{t_N})}{\log N_{t_N}} \le c+\varepsilon,
\end{align}
which completes the proof since $\varepsilon >0$ was chosen arbitrarily.
\end{proof}

\section{A practical comparison}
\label{practical}

We know from Section \ref{asymptotics} that, for sufficiently large $N$,
\begin{equation} 
  g^*(p,N)^2 \le \frac1{12} N \prod_{\ell |N}
  \left(1+\frac1\ell\right)  \le \frac {k_2}{12} N \prod_{\ell |N}
  \left(1+\frac1\ell\right),  
\end{equation}
in the context of Theorem \ref{main}. In this section we describe how large $N$ has to be, given $p$, to ensure that
\begin{equation} \label{comp} 
  12g^*(p,N)^2 \le N \prod_{\ell | N} \left(1+\frac1\ell\right).
\end{equation}

Fix $p \ge 5$, and let $\hat N$ be minimal such that if $N \ge \hat N$ then the inequality \eqref{comp} holds. Our strategy will be to first establish a theoretical upper bound for $\hat N$, and then to determine $\hat N$ precisely using the software \emph{Sage}~\cite{Sage}. Our theoretical upper bound is $N_{r-1}$ in the following:
\begin{thm} \label{theoretical}
Assume GRH and let $p \ge 5$. Let $r = r(p)$ be minimal such that $N_{r-1} \ge 29.2032p^4(\log p)^4$, and suppose $N \ge N_{r-1}$. Then
\begin{equation} N \ge 12g^*(p,N)^2,\end{equation}
so in particular the inequality \eqref{comp} holds.
\end{thm}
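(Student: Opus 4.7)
The plan is to combine the worst-case scenario reduction from Section~\ref{asymptotics} with the explicit Bach-Sorenson-type bound of Corollary~\ref{dist}. Given $N \geq N_{r-1}$, I would first invoke the worst-case property to produce $t \geq 0$ with $g^*(p,N) = g^*(p,N_t) = x_{t+1}$ and $N_t \leq N$. The target inequality $12 g^*(p,N)^2 \leq N$ then reduces to verifying $12 x_{t+1}^2 \leq N$, which I propose to handle by a case analysis on whether $t$ is at least $r-1$ or at most $r-2$.

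In the case $t \geq r-1$, one has $N \geq N_t \geq N_{r-1} \geq 29.2032\,p^4(\log p)^4$. Applying Corollary~\ref{dist} to the pair $(p, N_t)$ then supplies, under GRH, an explicit upper bound for $x_{t+1} = g^*(p,N_t)$, and the threshold $29.2032\,p^4(\log p)^4$ is calibrated so that this bound forces $x_{t+1}^2 \leq N_t/12$, closing the case. In the complementary case $t \leq r-2$, we have $x_{t+1} \leq x_{r-1}$, and the minimality of $r$ gives $N_{r-2} < 29.2032\,p^4(\log p)^4$. Applying Corollary~\ref{dist} this time to $(p, N_{r-2})$ bounds $x_{r-1} = g^*(p, N_{r-2})$ by a quantity no larger than $\sqrt{29.2032\,p^4(\log p)^4/12}$, yielding $12 x_{r-1}^2 \leq 29.2032\,p^4(\log p)^4 \leq N_{r-1} \leq N$.

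The main obstacle is pinning down the explicit numerical constant $29.2032$, which has to underwrite both cases simultaneously. It is forced by the exact form of Corollary~\ref{dist}, whose proof adapts the Bach-Sorenson bound (Theorem~\ref{BS}) from the setting of a single arithmetic progression to the union of primitive-root residue classes modulo $p$, while also accounting for the exclusion of the prime divisors of $N$. Once Corollary~\ref{dist} is in hand in its explicit form, verifying that $29.2032$ is the correct threshold becomes a routine but careful arithmetic calculation, and this is where the bulk of the numerical work will concentrate.
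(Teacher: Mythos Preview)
Your case split and the reduction to the worst-case sequence $(N_t)$ are fine and match the paper's short deduction of Theorem~\ref{theoretical} from Proposition~\ref{WorstCaseII}. The gap is in what you expect Corollary~\ref{dist} to deliver. That corollary takes coprime integers $a,q$ and a parameter $t$ and outputs the lower bound $\pi_{a,q}(1.56\,t^2 q^{2t}(\log q)^2)\ge q^{t-1}$; it does not take a pair ``$(p,N_t)$'' and it does not by itself produce an upper bound for $g^*(p,N_t)$ in terms of $N_t$. What you actually need in your first case is precisely the inequality $N_t\ge 12x_{t+1}^2$ whenever $N_t\ge 29.2032\,p^4(\log p)^4$, i.e.\ Proposition~\ref{WorstCaseII}, and this is the substantive part of the argument, not a ``routine but careful arithmetic calculation''.

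In the paper, Proposition~\ref{WorstCaseII} is obtained in several non-obvious steps: first a weaker version (Proposition~\ref{WorstCaseI}) with threshold $467.2512\,p^8(\log p)^4$, proved by contradiction using Lemma~\ref{AP} (a consequence of Corollary~\ref{dist}) together with a crude product-of-primes lower bound for $N_{r-1}$; then, for $p\ge 71$, a dichotomy on whether $r\le\varphi(p-1)$ (where the $t=1$ case of Corollary~\ref{upper} gives $x_r\le 1.56\,p^2(\log p)^2$) or $r>\varphi(p-1)$ (where Lemma~\ref{lower} forces $N_{r-1}\ge (p+1)^{12}$ and Proposition~\ref{WorstCaseI} applies); and finally a computer check for $5\le p<71$. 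Your second case implicitly assumes $x_{r-1}\le 1.56\,p^2(\log p)^2$, which is exactly the $r\le\varphi(p-1)$ branch of this dichotomy and is not guaranteed a priori; the smallness of $N_{r-2}$ does not by itself bound $x_{r-1}$ from above. So the plan as written skips the real work.
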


To obtain this bound, we study the `worst case scenario' $N = N_{r-1}$. Our bound in this situation is:
\begin{prop} \label{WorstCaseII}
Assume GRH, let $p \ge 5$, and let $r$ be a positive integer such that
\begin{equation} N_{r-1} \ge 29.2032p^4(\log p)^4.\end{equation}
Then 
\begin{equation} \label{WorstComp} N_{r-1} \ge 12x_r^2.\end{equation}
\end{prop}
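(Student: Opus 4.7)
The plan is to invoke Corollary \ref{dist}---our generalisation of Bach and Sorenson's explicit Linnik-type bound---which, under GRH, furnishes an explicit upper bound on $g^*(p,N)$. First I will observe that $g^*(p,N_{r-1}) = x_r$: by construction the prime factors of $N_{r-1}=x_1\cdots x_{r-1}$ are exactly the $r-1$ smallest prime primitive roots modulo $p$, so the least prime primitive root modulo $p$ not dividing $N_{r-1}$ is the next one in line, namely $x_r$. Substituting $N=N_{r-1}$ in Corollary \ref{dist} thus converts the problem into a concrete numerical estimate for $x_r$.

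The constant $29.2032$ is calibrated precisely so that this estimate, combined with the hypothesis $N_{r-1}\ge 29.2032\,p^4(\log p)^4$, delivers $12 x_r^2\le N_{r-1}$. Schematically, if Corollary \ref{dist} yields a bound of the shape $x_r \le K\,p^2(\log p)^2$ (for an appropriate $K$, possibly after absorbing subordinate factors), then squaring and multiplying by $12$ gives $12x_r^2 \le 12K^2 p^4(\log p)^4$; the choice $29.2032 = 12K^2$ (so $K \approx 1.56$) then yields $12x_r^2 \le N_{r-1}$ at once. Read backwards, the constant $29.2032$ is essentially twelve times the square of the leading coefficient appearing in Corollary \ref{dist}.

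The main obstacle, and the only real work, is to massage the general bound of Corollary \ref{dist}---which a priori may depend on both $p$ and $N_{r-1}$---into the clean $p$-only form above. Any residual factors of $\log N_{r-1}$ (or of $\omega(N_{r-1})$) appearing in the bound must be absorbed into $K$, using the hypothesis to estimate $\log N_{r-1}$ by a controlled multiple of $\log p$ at the threshold, and noting that the ratio $\log N_{r-1}/\log p$ grows arbitrarily slowly compared with the polynomial slack in $N_{r-1}$. Carrying out the resulting numerical bookkeeping---tracking the constants through Corollary \ref{dist}, verifying each intermediate inequality, and confirming that $29.2032$ is the correct calibration---is routine but tedious, and forms the bulk of the proof. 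The use of GRH enters only via Corollary \ref{dist}; once that bound is in hand, everything else reduces to manipulating explicit inequalities.
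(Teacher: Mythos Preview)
Your sketch misreads what Corollary~\ref{dist} actually provides, and in doing so misses the real difficulty. Corollary~\ref{dist} is a lower bound for $\pi_{a,q}$; it does not take an $N$ as input and return a bound on $g^*(p,N)$. Its relevant consequence here is Corollary~\ref{upper}, which (with $t=1$) gives $x_{\varphi(p-1)}\le 1.56\,p^2(\log p)^2$. This indeed yields $12x_r^2\le 12\cdot 1.56^2\,p^4(\log p)^4=29.2032\,p^4(\log p)^4\le N_{r-1}$, but only when $r\le\varphi(p-1)$. For larger $r$ the bound on $x_r$ from Corollary~\ref{upper} deteriorates rapidly (it is $1.56\,t^2 p^{2t}(\log p)^2$ at $r=\varphi(p-1)p^{t-1}$), so there is no fixed constant $K$ with $x_r\le K\,p^2(\log p)^2$ for all $r$. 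Your remarks about absorbing ``residual factors of $\log N_{r-1}$'' are off target: no such factors appear, and the genuine obstacle is the dependence on $r$, not on $N_{r-1}$.

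The paper handles the remaining case $r>\varphi(p-1)$ by a two-stage argument you do not anticipate. First it proves a weaker version (Proposition~\ref{WorstCaseI}, threshold $467.2512\,p^8(\log p)^4$) via a contradiction argument using Lemma~\ref{AP} (itself a consequence of Corollary~\ref{dist}) to show $r>\varphi(p-1)x_r^{1/9}$, which forces $N_{r-1}$ to be too large. Then, for $p\ge 71$, Lemma~\ref{lower} gives $N_{r-1}\ge N_{\varphi(p-1)}\ge (p+1)^{12}\ge 467.2512\,p^8(\log p)^4$ whenever $r>\varphi(p-1)$, reducing to Proposition~\ref{WorstCaseI}; the finitely many $p<71$ are dispatched by a short computer check. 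Your plan captures only the easy half of the case split.
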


\subsection{The distribution of prime primitive roots modulo $p$, and more generally that of primes in arithmetic progression}

In pursuit of Proposition \ref{WorstCaseII}, we study the distribution of prime primitive roots modulo $p$. Specifically, we seek an explicit lower bound for the counting function. As this task is of intrinsic interest, we now indulge in a discussion that goes slightly beyond what is strictly necessary for our purposes. For a more comprehensive review, see the introduction of~\cite{Bach}. Many of the results in this section can be generalised to composite moduli. 

There are two main approaches to our task: (i) break the problem into $\varphi(p-1)$ primitive root residue classes modulo $p$ and study the distribution of primes in arithmetic progression, or (ii) specifically use the primitive root property. The approach (ii) is currently superior for deriving upper bounds for the least prime primitive root modulo $p$, for instance (assuming the Riemann hypothesis for all Hecke characters) Shoup~\cite{Shoup} uses sieve methods to provide the upper bound 
\begin{equation} O(r^4 (\log r +1)^4 (\log p)^2), \end{equation}
where $r$ is the number of distinct prime divisors of $p-1$;
note the discussion following~\cite[Corollary 3.1]{Martin}.

It is difficult to understand the distribution of such primes via the approach (ii), so we focus on (i). There are many classical asymptotic results, such as the prime number theorem for arithmetic progressions. For the least prime in an arithmetic progression $a \mod p$, where $p$ does not divide $a$, Linnik (see~\cite{Linnik1} and~\cite{Linnik2}) famously provided the upper bound 
\begin{equation} p^{O(1)},\end{equation}
and the exponent can be 5.2 unconditionally, if the bound is multiplied by a constant (see~\cite{Xylouris}). Conditional results are much stronger, and the conjectured upper bound is $p^2$ (see~\cite{Heath-Brown}).

Bach and Sorenson~\cite{Bach} derived an explicit version of Linnik's theorem:
\begin{thm} [see {\cite[Theorem 5.3]{Bach}}] \label{BS} Assume GRH. Let $a$ and $q$ be relatively prime positive integers. Then there exists $\ell \equiv a \mod q$ such that 
\begin{equation} \ell < 2(q \log q)^2. \end{equation}
\end{thm}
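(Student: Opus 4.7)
The plan is to apply an explicit, GRH-conditional prime number theorem for arithmetic progressions to the Chebyshev function $\psi(x; q, a) = \sum_{n \le x,\; n \equiv a \!\!\pmod{q}} \Lambda(n)$, and to verify that, at $x = 2(q \log q)^2$, the main term strictly dominates the error term, forcing the existence of a prime $\ell \equiv a \pmod{q}$ in $[1, x]$.

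First I would use orthogonality of Dirichlet characters modulo $q$ to write
\begin{equation*}
\psi(x; q, a) = \frac{1}{\varphi(q)} \sum_{\chi \bmod q} \bar{\chi}(a)\, \psi(x, \chi),
\end{equation*}
so that the principal character contributes the main term $x/\varphi(q)$ (up to a negligible correction from the primes dividing $q$). For each non-principal $\chi$, I would apply the truncated explicit formula
\begin{equation*}
\psi(x, \chi) = - \sum_{|\Im \rho| \le T} \frac{x^\rho}{\rho} + O\!\left( \frac{x (\log(qxT))^2}{T} \right),
\end{equation*}
and invoke GRH to place every non-trivial zero on the critical line $\Re(\rho) = 1/2$. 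Combined with the standard density estimate that $L(s, \chi)$ has $O(T \log(qT))$ zeros up to height $T$, this bounds the zero sum by $O(x^{1/2}(\log(qT))^2)$. Optimising $T \sim x^{1/2}$ and summing over characters then produces an estimate of the form
\begin{equation*}
\left| \psi(x; q, a) - \frac{x}{\varphi(q)} \right| \le C\, x^{1/2}(\log(qx))^2
\end{equation*}
with a numerically explicit absolute constant $C$.

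Substituting $x = 2(q \log q)^2$, the main term is roughly $2(q \log q)^2/\varphi(q)$, while the error is of order $C\sqrt{2}\, q \log q \cdot (\log q + O(\log\log q))^2$; a direct check confirms that the main term wins. This gives $\psi(x; q, a) > 0$, and subtracting off the contribution of proper prime powers, which is $O(x^{1/2} \log x)$ and therefore negligible, produces an actual prime $\ell \equiv a \pmod{q}$ with $\ell \le x$.

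The principal obstacle is sharpening the constant $C$ enough that the inequality genuinely goes through at $x = 2(q \log q)^2$, rather than at some larger multiple. Bach and Sorenson overcome this by replacing the sharp cutoff in $\psi(x; q, a)$ with a smoothed sum $\sum_n \Lambda(n) w(n/x)$ for a carefully engineered test function $w$; the smoothing simultaneously improves the truncation error in the explicit formula and the contribution of each individual zero, shaving off the log factors that would otherwise block the $2(q\log q)^2$ threshold. A final, unavoidable step is a finite computation: the asymptotic argument is vacuous for small $q$, so for those moduli one directly verifies, class by class, that the least prime in each arithmetic progression satisfies the claimed bound.
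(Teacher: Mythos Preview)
Your outline is correct and matches the paper's summary of Bach--Sorenson's argument: orthogonality of Dirichlet characters, an explicit formula evaluated under GRH, a smoothing device to tame the error, and a finite brute-force verification for small moduli. The paper records the precise smoothing weight they use, namely $S(x,\chi)=\sum_{n<x}\Lambda(n)\chi(n)(n/x)^\alpha\log(x/n)$ with a free parameter $\alpha$ tuned numerically, and also singles out the imprimitivity correction $i(x)=\sum_\chi \chi(a^{-1})(S(x,\hat\chi)-S(x,\chi))$ as a separate term to be bounded; but these are refinements of exactly the picture you sketched.
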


\begin{proof}[Summary of their approach]
For (Dirichlet) characters $\chi$ modulo $q$, real numbers $x >1$, and real numbers $\alpha$, put
\begin{equation} S(x,\chi) = \sum_{n<x} \Lambda(n) \chi(n) (n/x)^\alpha \log (x/n), \end{equation}
where $\Lambda$ is the von Mangoldt function. Let $a^{-1}$ denote the multiplicative inverse of $a$ modulo $q$. By orthogonality,
\begin{equation} \label{BSkey1} \sum_{\chi \mod q} \chi(a^{-1})S(x,\chi) = \varphi(q) \sum_{\substack{n<x \\ n \equiv a \mod q}} 
\Lambda(n) (n/x)^\alpha \log (x/n).\end{equation}
Suppose there exist no primes $\ell < x$ that are congruent to $a$ modulo $q$. Then
\begin{equation} \sum_{\chi \mod q} \chi(a^{-1})S(x,\chi) = p(x), \end{equation}
where $p(x)$ is the contribution of proper prime powers $n$ to the right hand side of equation \eqref{BSkey1}. For characters $\chi \mod q$, let $\hat \chi$ denote the primitive character induced by $\chi$. Then
\begin{equation} \label{BSkey2} 
\Big | \sum_{\chi \mod q} \chi(a^{-1})S(x,\hat \chi) \Big | \le |i(x)|+ p(x), \end{equation}
where 
\begin{equation} i(x) = \sum_{\chi \mod q} \chi(a^{-1}) (S(x,\hat \chi) - S(x,\chi)). \end{equation}
In~\cite[Subsection 4.1]{Bach}, tools from algebraic number theory and analytic number theory are used to bound $|i(x)|$ from above. In~\cite[Subsection 4.2]{Bach}, complex integration is used to estimate $|\sum_{\chi \mod q} \chi(a^{-1})S(x,\hat \chi)|$. In~\cite[Subsection 4.3]{Bach}, known estimates for a certain arithmetic function provide an upper bound for $p(x)$. In~\cite[Subsection 5.2]{Bach}, the cases $q \ge 1000$ and $q < 1000$ are considered separately. In the first case computer programs are used to choose $x$ and $\alpha$ so that the inequality \eqref{BSkey2} is invalidated, thereby proving that some prime $\ell < x$ is congruent to $a$ modulo $q$; the second case is handled by brute force.
\end{proof}
If further details are sought then~\cite[special case (1) on p362]{Bach0} and the proof of~\cite[Corollary~3.4]{Bach} describe our specific context within~\cite{Bach}. Note that~\cite[Theorem 5.3]{Bach} assumes the generalisation of the Riemann hypothesis to all Hecke $L$-functions, whereas the statement of Theorem \ref{BS} merely assumes it for Dirichlet $L$-functions. The stronger assumption is necessary for the more general results in~\cite{Bach}, but only GRH is needed for~\cite[Theorem 5.3]{Bach}. 
To justify this claim we use the notation of~\cite[Subsection 4.2]{Bach}, where Bach and Sorenson use the assumption for $\zeta_E$ and $L(\cdot, \hat \chi)$. The latter is a Dirichlet $L$-function, since $K=\bQ$ in our context, and the former is a product of Dirichlet $L$-functions (see~\cite[equation (2.2)]{Bach}), since for our purposes $E = \bQ(\zeta_q)$ is an abelian extension of $K=\bQ$, where $\zeta_q$ is a primitive $q$th root of unity.

The constant 2 appears to have been chosen for simplicity. Following the proof of~\cite[Theorem~5.3]{Bach}, but not rounding up until the end, and insisting that $q > 2$, the constant 2 can be improved to 1.56:
\begin{thm} \label{BS'}Assume GRH. Let $a$ and $q>2$ be relatively prime integers. Then there exists $\ell \equiv a \mod q$ such that 
\begin{equation} \ell < 1.56 (q \log q)^2. \end{equation}
\end{thm}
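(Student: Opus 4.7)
The plan is to mirror the proof of Theorem \ref{BS} due to Bach and Sorenson, but to track the numerical constants more carefully and to exploit the restriction $q>2$. Specifically, starting from the orthogonality identity \eqref{BSkey1}, I would assume for contradiction that no prime $\ell\equiv a\pmod q$ satisfies $\ell<x$, where $x=1.56(q\log q)^2$, and then derive a contradiction from \eqref{BSkey2} by combining the three ingredients used in Bach and Sorenson's argument: (a) the upper bound on $|i(x)|$ from their Subsection 4.1, (b) the GRH-based estimate on $|\sum_\chi \chi(a^{-1}) S(x,\hat\chi)|$ from their Subsection 4.2 (which requires GRH only for Dirichlet $L$-functions, as justified in the passage following Theorem~\ref{BS}), and (c) the bound on $p(x)$ from their Subsection 4.3.

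The key step is to revisit their Subsection 5.2 with the following discipline: at every instance where they round a numerical quantity up in order to produce the clean constant $2$, I would retain the exact value throughout, carrying symbolic expressions in $q$ and $\log q$ up to the final comparison. The stipulation $q>2$ removes the one small case that forces the constant $2$ in~\cite[Theorem~5.3]{Bach}: for $q=1$ or $q=2$ the quantity $q\log q$ vanishes or is too small to absorb the error terms, but excluding these allows $\log q \ge \log 3$, which tightens several of the intermediate inequalities. After substituting $x=C(q\log q)^2$ into the combined bound from (a), (b), (c), the requirement that \eqref{BSkey2} be violated yields an inequality in $C$; numerical optimisation of the parameter $\alpha$ in the definition of $S(x,\chi)$ and a computer verification for $3 \le q < 1000$ (analogous to the brute-force step in Bach--Sorenson) should show that $C=1.56$ suffices.

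The main obstacle, and the only real work, is the bookkeeping: one must re-derive every explicit inequality in Sections~4 and~5.2 of~\cite{Bach} without the convenient rounding, and then re-run the numerical optimisation of $x$ and $\alpha$ to confirm that the improved constant $1.56$ invalidates \eqref{BSkey2} uniformly for all $q\ge 1000$ (say). The small-$q$ range must then be checked directly by computing, for each $3\le q<1000$ and each $a$ coprime to $q$, the least prime $\ell\equiv a\pmod q$ and verifying $\ell<1.56(q\log q)^2$; this is a finite computation well within reach of \emph{Sage}. No new analytic input beyond~\cite{Bach} is required—the improvement is purely from sharper constant-tracking together with the mild hypothesis $q>2$.
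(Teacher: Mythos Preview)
Your proposal is correct and follows essentially the same approach as the paper: the paper's entire justification is the sentence ``Following the proof of~\cite[Theorem~5.3]{Bach}, but not rounding up until the end, and insisting that $q>2$, the constant $2$ can be improved to $1.56$,'' which is precisely the plan you outline in more detail.
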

In fact the constant can be improved a little more (for $q>2$), but our theoretical bound for $\hat N$ will serve only as a ceiling for brute force computation, so we satisfy ourselves with the constant 1.56.

\subsection{A generalisation of Theorem \ref{BS'}}

We seek not the least prime in an arithmetic progression but the distribution of such primes, so we provide the following corollary:
\begin{cor} \label{dist}
Assume GRH. Let $a$ and $q>2$ be relatively prime integers, and let $t$ be a positive integer. Then
\begin{equation} \label{distbound} \pi_{a,q}(1.56t^2 q^{2t} (\log q)^2) \ge q^{t-1}.\end{equation}
\end{cor}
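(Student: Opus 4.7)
The plan is to apply Theorem \ref{BS'} not with modulus $q$, but with modulus $q^t$, and to exploit the fact that there are exactly $q^{t-1}$ residue classes modulo $q^t$ that reduce to $a$ modulo $q$. First I would verify that each such residue class is coprime to $q^t$: if $b \equiv a \pmod q$ and some prime divisor of $q$ divided $b$, it would also divide $a$, contradicting $\gcd(a,q) = 1$. So the $q^{t-1}$ lifts $b_1, \ldots, b_{q^{t-1}}$ of $a$ modulo $q^t$ are all coprime to $q^t$.

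Next I would apply Theorem \ref{BS'} with modulus $q^t$ (which exceeds $2$ since $q > 2$ and $t \ge 1$) to each residue class $b_i$. This yields, for each $i$, a prime
\begin{equation}
\ell_i \equiv b_i \pmod{q^t} \quad\text{with}\quad \ell_i < 1.56\,(q^t \log q^t)^2 = 1.56\,t^2 q^{2t} (\log q)^2.
\end{equation}
Since the $b_i$ are pairwise distinct modulo $q^t$, the primes $\ell_1, \ldots, \ell_{q^{t-1}}$ are pairwise distinct. Each satisfies $\ell_i \equiv a \pmod q$ by construction, so we obtain $q^{t-1}$ distinct primes in the arithmetic progression $a \bmod q$ that are bounded above by $1.56\,t^2 q^{2t} (\log q)^2$, which is exactly the claimed bound \eqref{distbound}.

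There is no substantive obstacle; the argument is a packaging of Theorem \ref{BS'}, and the only small check (that lifts of $a$ modulo $q^t$ remain units modulo $q^t$) is immediate. The main conceptual point worth highlighting in the write-up is that replacing $q$ by $q^t$ in the bound $1.56(Q \log Q)^2$ (with $Q = q^t$) produces precisely the shape $1.56\, t^2 q^{2t}(\log q)^2$ appearing in the statement, which explains why this particular exponent structure is the natural one.
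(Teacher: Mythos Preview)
Your proof is correct and is essentially identical to the paper's: both apply Theorem~\ref{BS'} with modulus $q^t$ to each of the $q^{t-1}$ residue classes $a+sq \pmod{q^t}$ ($s=0,\ldots,q^{t-1}-1$) lifting $a$, noting these are coprime to $q^t$, and then observe that the resulting primes are distinct and all congruent to $a$ modulo $q$.
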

\begin{proof}
For each $s = 0,1,\ldots, q^{t-1}-1$, there exists $\ell \equiv a+sq \mod q^t$ such that
\begin{equation} \ell \le 1.56t^2 q^{2t} (\log q)^2,\end{equation}
by Theorem \ref{BS'}, since $(a+sq,q^t)=1$. These $\ell$ are distinct and congruent to $a$ modulo $q$.
\end{proof}

There are many ways in which to convert Corollary \ref{dist} into an explicit lower bound for $\pi_{a,q}(x)$ for all sufficiently large $x$; some are better asymptotically, while others do not require $x$ to be as large. Since our theoretical upper bound for $\hat N$ will serve merely as a ceiling for machine calculations, we have executed this fairly arbitrarily, and there may be other ways to improve our bound:
\begin{lemma} \label{AP}
Assume GRH. Let $a$ and $q \ge 5$ be relatively prime integers, and let
\begin{equation} x \ge 6.24 q^4 (\log q)^2.\end{equation}
Then
\begin{equation} \pi_{a,q}(x) > x^{1/9}.\end{equation}
\end{lemma}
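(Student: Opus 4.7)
The plan is to apply Corollary~\ref{dist} with an appropriately chosen $t$ and then extract the desired bound by a short numerical argument. First I would note that $6.24 = 1.56 \cdot 2^2$, so the hypothesis on $x$ reads $x \ge 1.56 \cdot 2^2 \cdot q^{2\cdot 2}(\log q)^2$, guaranteeing that the set of positive integers $t$ satisfying $1.56 t^2 q^{2t}(\log q)^2 \le x$ is nonempty and contains $t=2$. Let $t = t(q,x)$ denote its maximum. Then Corollary~\ref{dist} combined with the monotonicity of $\pi_{a,q}$ yields
\begin{equation}
\pi_{a,q}(x) \ge \pi_{a,q}\bigl(1.56 t^2 q^{2t}(\log q)^2\bigr) \ge q^{t-1},
\end{equation}
reducing the task to showing $q^{t-1} > x^{1/9}$.

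By the maximality of $t$, we have the strict upper bound $x < 1.56(t+1)^2 q^{2t+2}(\log q)^2$, and hence $x^{1/9} < \bigl(1.56(t+1)^2 q^{2t+2}(\log q)^2\bigr)^{1/9}$. It is therefore enough to establish, for every integer $q \ge 5$ and every integer $t \ge 2$,
\begin{equation} \label{PlanKey}
q^{9(t-1)} \ge 1.56(t+1)^2 q^{2t+2}(\log q)^2,
\end{equation}
which simplifies to $q^{7t-11} \ge 1.56(t+1)^2 (\log q)^2$.

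I would verify \eqref{PlanKey} by induction on $t$ starting from $t = 2$. In the base case the inequality reads $q^3 \ge 14.04 (\log q)^2$; since $q \mapsto q^3/(\log q)^2$ has positive derivative $q^2(3\log q - 2)/(\log q)^3$ for $q \ge 5$, this reduces to the single check $125 \ge 14.04 \cdot (\log 5)^2 \approx 36.4$, which holds with room to spare. Passing from $t$ to $t+1$ multiplies the left-hand side of \eqref{PlanKey} by $q^7 \ge 5^7 = 78125$ while multiplying the right-hand side only by $\bigl((t+2)/(t+1)\bigr)^2 \le (4/3)^2 < 2$, so the inequality propagates at once. The only real obstacle is the base-case numerics, but the generous margin there makes clear that the exponent $1/9$ is a conservative choice, and no analytic input beyond Corollary~\ref{dist} is needed.
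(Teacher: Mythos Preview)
Your proof is correct and follows exactly the paper's approach: choose the maximal $t\ge 2$ with $1.56\,t^2 q^{2t}(\log q)^2\le x$, apply Corollary~\ref{dist}, and then verify $q^{t-1}>x^{1/9}$ using the upper bound $x<1.56(t+1)^2 q^{2t+2}(\log q)^2$. The paper dismisses this last step as ``straightforward arithmetic''; you simply carry it out explicitly via the clean induction on $t$.
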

\begin{proof}
Choose $t \ge 2$ such that
\begin{equation}
1.56t^2q^{2t} (\log q)^2 \le x < 1.56(t+1)^2q^{2(t+1)}(\log q)^2.
\end{equation}
By Corollary \ref{dist},
\begin{equation} 
  \pi_{a,q}(x) \ge \pi_{a,q}\left(1.56t^2 q^{2t} (\log q)^2\right) \ge q^{t-1}.
\end{equation}
Straightforward arithmetic confirms that $q^{t-1} > x^{1/9}$, completing the proof.
\end{proof}

By summing the bound \eqref{distbound} over the primitive root residue classes, we deduce:
\begin{cor} \label{upper}
Assume GRH, let $p>2$, and let $t$ be a positive integer. Then
\begin{equation}
x_{\varphi(p-1)p^{t-1}} \le 1.56t^2 p^{2t} (\log p)^2.
\end{equation}
\end{cor}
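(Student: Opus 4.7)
The plan is to apply Corollary~\ref{dist} with modulus $q=p$ to each of the primitive root residue classes modulo $p$ and then sum the resulting lower bounds.

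First, I would check the hypotheses. Since $p$ is a prime with $p>2$, we have $p\ge 3$, so the condition $q>2$ in Corollary~\ref{dist} is satisfied. Each primitive root residue class $a$ modulo $p$ automatically satisfies $\gcd(a,p)=1$, so Corollary~\ref{dist} applies and yields
\begin{equation}
\pi_{a,p}\bigl(1.56\, t^2 p^{2t} (\log p)^2\bigr) \ge p^{t-1}.
\end{equation}

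Next, I would exploit the fact that being a primitive root modulo $p$ is a property of the residue class: every prime $\ell$ counted by $\pi_{a,p}$ for such an $a$ is a prime primitive root modulo $p$. Since the $\varphi(p-1)$ primitive root residue classes are disjoint, summing the above inequality over them gives a lower bound of $\varphi(p-1)\, p^{t-1}$ on the number of prime primitive roots modulo $p$ that are at most $1.56\, t^2 p^{2t} (\log p)^2$.

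Finally, by the very definition of the sequence $(x_j)$ as the ordered prime primitive roots modulo $p$, the existence of at least $\varphi(p-1)\, p^{t-1}$ such primes below $1.56\, t^2 p^{2t} (\log p)^2$ yields
\begin{equation}
x_{\varphi(p-1) p^{t-1}} \le 1.56\, t^2 p^{2t} (\log p)^2,
\end{equation}
as required. There is no real obstacle here: once Corollary~\ref{dist} is in hand, the statement is a purely combinatorial consequence, and the only thing to verify carefully is that each prime counted in a primitive root residue class is genuinely a prime primitive root, which is immediate.
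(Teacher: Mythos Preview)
Your proposal is correct and follows exactly the approach indicated in the paper: apply Corollary~\ref{dist} with $q=p$ to each of the $\varphi(p-1)$ primitive root residue classes and sum, obtaining at least $\varphi(p-1)p^{t-1}$ prime primitive roots below $1.56\,t^2 p^{2t}(\log p)^2$, whence the stated bound on $x_{\varphi(p-1)p^{t-1}}$.
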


\subsection{Completion of the proof of Theorem \ref{theoretical}}

Now that we have an upper bound for the sequence $(x_r)$, we formulate a crude upper bound for the sequence $(N_r)$:
\begin{lemma} \label{lower} Let $p \ge 5$. Then
\begin{equation} N_{\varphi(p-1)} \ge (p+1)^{\varphi(p-1)/2}.\end{equation}
\end{lemma}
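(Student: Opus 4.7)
My plan is to derive the lemma from two inequalities concerning $S$, the set of residues in $\{1,\ldots,p-1\}$ that are primitive roots modulo $p$ (so $|S|=\varphi(p-1)$). Specifically, I will show (i) $\prod_{a\in S}a \geq (p+1)^{\varphi(p-1)/2}$ and (ii) $N_{\varphi(p-1)} \geq \prod_{a\in S} a$.

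For (i), I exploit that $S$ is closed under multiplicative inversion modulo $p$: if $a$ has order $p-1$, so does $a^{-1}$. For $p\geq 5$ no element of $S$ is self-inverse, because $a^2\equiv 1\pmod{p}$ forces $a\equiv\pm 1$, and neither $1$ (of order $1$) nor $-1$ (of order $2<p-1$) is a primitive root. Hence $S$ partitions into $\varphi(p-1)/2$ disjoint pairs $\{a,a^{-1}\}$. For each such pair, $a\cdot a^{-1}\equiv 1\pmod{p}$ and $a\cdot a^{-1}\geq 4 > 1$, so $a\cdot a^{-1}\geq p+1$. Multiplying over the pairs gives (i).

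For (ii), I book-keep residues. Let $n_a$ count the $x_i$ with $1\leq i\leq \varphi(p-1)$ satisfying $x_i\equiv a\pmod{p}$, so $\sum_{a\in S} n_a = \varphi(p-1)$. By the minimality defining $x_1,\ldots,x_{\varphi(p-1)}$, the $x_i$'s in class $a$ are the $n_a$ smallest primes $\equiv a\pmod{p}$. Distinct primes in a fixed residue class modulo $p$ differ by at least $p$, so the $j$-th such prime is at least $a+(j-1)p$, which in turn is at least $p$ for $j\geq 2$. Thus the product of the $n_a$ smallest primes in class $a$ is at least $a\cdot p^{n_a-1}$ when $n_a\geq 1$. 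Writing $m = |\{a\in S : n_a=0\}|$, a short calculation yields $\sum_{a:n_a\geq 1}(n_a-1) = m$, so
\begin{equation*}
N_{\varphi(p-1)} \geq \Bigl(\prod_{a:n_a\geq 1} a\Bigr)p^m \geq \Bigl(\prod_{a:n_a\geq 1} a\Bigr)\Bigl(\prod_{a:n_a=0} a\Bigr) = \prod_{a\in S} a,
\end{equation*}
using $\prod_{a:n_a=0}a \leq (p-1)^m \leq p^m$.

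The main obstacle will be step (ii): the primes $x_i$ need not cover each primitive-root residue class modulo $p$ exactly once, and the bookkeeping must show that the extra factors of $p$ accrued when a class is hit more than once outweigh the loss from classes that are missed. Step (i), by contrast, is a neat pairing observation relying only on the fact that $-1$ is not a primitive root modulo $p$ for $p\geq 5$.
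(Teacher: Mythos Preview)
Your proof is correct and follows the paper's two-step strategy exactly: the inverse-pairing argument for (i) is identical to the paper's, and (ii) is the inequality $N_{\varphi(p-1)}\ge g_1\cdots g_{\varphi(p-1)}$ that the paper states in one line without justification. Your residue-class bookkeeping thus fills in a detail the authors leave implicit. A quicker route to (ii), perhaps what they had in mind, is pigeonhole: with both sequences in increasing order one has $x_i\ge g_i$ for every $i$, since otherwise $x_1<\cdots<x_i<g_i\le p-1$ would be $i$ distinct primitive roots in $\{2,\ldots,g_i-1\}$, a set containing only $g_1,\ldots,g_{i-1}$.
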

\begin{proof} Let $g_1, \ldots, g_{\varphi(p-1)}$ be integer representatives for the primitive root residue classes modulo $p$, with
\begin{equation}
1< g_1 < g_2 < \ldots < g_{\varphi(p-1)} < p.
\end{equation}
These come in pairs of inverses modulo $p$, and no $g_i$ can pair with itself because its order modulo $p$ is $p-1 > 2$. The product of each pair is at least $p+1$, so
\begin{equation}
N_{\varphi(p-1)} = x_1 \cdots x_{\varphi(p-1)} \ge g_1 \cdots g_{\varphi(p-1)} \ge (p+1)^{\varphi(p-1)/2}.
\end{equation}
\end{proof}

We show Proposition \ref{WorstCaseII} by first establishing a weaker bound:
\begin{prop}\label{WorstCaseI}
Assume GRH, let $p \ge 5$, and let $r$ be a positive integer such that
\begin{equation} N_{r-1} \ge 467.2512p^8(\log p)^4.\end{equation}
Then $N_{r-1} \ge 12 x_r^2$.
\end{prop}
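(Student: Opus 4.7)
The plan is to apply Corollary \ref{upper} with parameter $t=2$ to handle the main regime $r\le\varphi(p-1)p$, and to extend to $r>\varphi(p-1)p$ by induction on $r$. The hypothesis constant $467.2512=12\cdot(1.56\cdot 4)^2$ is reverse-engineered for precisely this $t=2$ step.

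Concretely, Corollary \ref{upper} with $t=2$ yields $x_{\varphi(p-1)p}\le 1.56\cdot 4\cdot p^4(\log p)^2=6.24\,p^4(\log p)^2$, whence $12x_{\varphi(p-1)p}^2\le 467.2512\,p^8(\log p)^4$. For any $r\le\varphi(p-1)p$ we have $x_r\le x_{\varphi(p-1)p}$, so the hypothesis $N_{r-1}\ge 467.2512\,p^8(\log p)^4$ immediately gives the conclusion. For $r>\varphi(p-1)p$, I would proceed by induction: supposing $N_{r-2}\ge 12x_{r-1}^2$, we have $N_{r-1}=N_{r-2}\cdot x_{r-1}\ge 12x_{r-1}^3$, so the conclusion reduces to $x_r\le x_{r-1}^{3/2}$. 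Under GRH, explicit prime-gap estimates in arithmetic progression give $x_r-x_{r-1}=O(\sqrt{x_{r-1}}(\log x_{r-1})^2)$, which implies $x_r\le x_{r-1}^{3/2}$ as soon as $x_{r-1}$ exceeds a modest explicit threshold.

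The main obstacle is making this inductive step fully explicit. One needs a uniform, quantitative GRH-based prime-gap bound in an arithmetic progression modulo $p$, valid from $r=\varphi(p-1)p+1$ onward. An alternative route is to iterate Corollary \ref{upper} for $t\ge 3$: for $r$ with $\varphi(p-1)p^{t-2}<r\le\varphi(p-1)p^{t-1}$, bound $x_r\le 1.56\,t^2p^{2t}(\log p)^2$ and lower-bound $N_{r-1}\ge N_{\varphi(p-1)p^{t-2}}$ via Lemma \ref{lower} (for the first $\varphi(p-1)$ factors) combined with the crude estimate $x_i\ge p$ for $i>\varphi(p-1)$; one then verifies that this lower bound exceeds $29.2032\,t^4p^{4t}(\log p)^4$ for every $t\ge 3$. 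Either way, borderline combinations such as $p=5$, $t=3$ are the tightest and may require brief numerical confirmation to seal the argument.
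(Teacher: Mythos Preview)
Your treatment of the range $r\le\varphi(p-1)p$ is correct and in fact more direct than the paper's: Corollary~\ref{upper} with $t=2$ gives $x_r\le x_{\varphi(p-1)p}\le 6.24\,p^4(\log p)^2$, so $12x_r^2\le 467.2512\,p^8(\log p)^4\le N_{r-1}$ immediately. The paper reaches the same threshold by contradiction, deducing $x_r>6.24\,p^4(\log p)^2$ from an assumed failure and hence $r>\varphi(p-1)p$.

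The genuine gap is in the regime $r>\varphi(p-1)p$, and neither of your routes closes it. For the inductive route, the base case is not secured: you need $N_{\varphi(p-1)p-1}\ge 12x_{\varphi(p-1)p}^2$, but the proposition's hypothesis constrains $N_{r-1}$ for the given $r$, not for the index $\varphi(p-1)p$, and monotonicity of $N_s$ goes the wrong way; you would have to establish this separately for every $p\ge 5$. Beyond that, making the GRH prime-gap constant explicit enough to guarantee $x_r\le x_{r-1}^{3/2}$ from the relevant threshold onward is precisely the sort of quantitative work you have not done. Your second route fares worse: already at $t=3$, $p=5$ the bound $x_r\le 1.56\cdot 9\cdot 5^6(\log 5)^2\approx 5.7\times 10^5$ gives $12x_r^2$ of order $10^{12}$, whereas your lower bound $N_{\varphi(p-1)p}\ge (p+1)^{\varphi(p-1)/2}p^{\varphi(p-1)(p-1)}=6\cdot 5^8\approx 2.3\times 10^6$ misses by six orders of magnitude (and the hypothesis $N_{r-1}\ge 467.2512\cdot 5^8(\log 5)^4\approx 1.2\times 10^9$ does not rescue it). This is not a borderline to be checked numerically; the bounds are structurally too slack.

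The paper's argument for $r>\varphi(p-1)p$ is different in kind. Continuing by contradiction with $x_r>6.24\,p^4(\log p)^2$, it applies Lemma~\ref{AP} (the explicit counting bound $\pi_{a,p}(x)>x^{1/9}$) summed over the $\varphi(p-1)$ primitive-root classes to obtain $r>\varphi(p-1)x_r^{1/9}$, hence $N_{r-1}<12x_r^2<12(r/\varphi(p-1))^{18}$. On the other hand, the product of the first $r-1$ primes gives $N_{r-1}\ge 12(r/2)^{18}$ once $r>44$, forcing $\varphi(p-1)p<r\le 44$ and therefore $p\in\{5,7\}$; these two primes are finished by a short computer check. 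The idea you are missing is to feed the assumed inequality $N_{r-1}<12x_r^2$ back through the counting lemma to convert it into a purely combinatorial inequality in $r$, which then collapses to finitely many cases.
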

\begin{proof}
Proof by contradiction: assume that $N_{r-1} < 12 x_r^2$. Then 
\begin{equation} \label{first} x_r > 6.24 p^4 (\log p)^2, \end{equation}
so Lemma \ref{AP} gives
\begin{equation} \label{sumthis} \pi_{a,p}(x_r) > x_r^{1/9} \end{equation}
for all integers $a$ that are not divisible by $p$. Since $r$ is the number of prime primitive roots modulo $p$ that are less than or equal to $x_r$, summing the inequality \eqref{sumthis} over all primitive root residue classes $a$ modulo $p$ yields
\begin{equation} r> \varphi(p-1)x_r^{1/9}.\end{equation}
Now
\begin{equation} \label{second} 
  N_{r-1} < 12 x_r^2 < 12 \left( \frac r {\varphi(p-1)} \right)^{18}.
\end{equation}

Specialising $t=2$ in Corollary \ref{upper} yields
\begin{equation} x_{p\varphi(p-1)} \le 6.24p^4(\log p)^2,\end{equation}
which together with the inequality \eqref{first} implies that $r > p \varphi(p-1)$. Induction shows that if $r>44$ then $N_{r-1} \ge 12(0.5r)^{18}$ (use the product of the first $r-1$ primes as a crude lower bound for $N_{r-1}$), which would contradict the inequality \eqref{second}. Hence $p \varphi(p-1) < r \le 44$, so $p=5,7$. In each of these cases $10 <r \le 44$ and $N_{10} > 12x_{44}^2$ (by computer check), completing the proof.
\end{proof}

Finally we prove Proposition~\ref{WorstCaseII} and Theorem~\ref{theoretical}.
\begin{proof}[Proof of Proposition~\ref{WorstCaseII}]
First assume that $p \ge 71$. In this case it is easy to show, by considering cases, that $\varphi(p-1) \ge 24$. Specialising $t=1$ in Corollary \ref{upper} yields
\begin{equation} \label{t1}
x_{\varphi(p-1)} \le 1.56p^2 (\log p)^2,
\end{equation}
so the result follows immediately if $r \le \varphi(p-1)$. However, if $r > \varphi(p-1)$ then, using Lemma~\ref{lower},
\begin{equation} N_{r-1} \ge N_{\varphi(p-1)} \ge (p+1)^{\varphi(p-1)/2} \ge (p+1)^{12} \ge 467.2512p^8 (\log p)^4, \end{equation}
whereupon the result follows from Proposition \ref{WorstCaseI}.

For each $p$ with $5 \le p < 71$, there are very few values of $r$ for which 
\begin{equation} 29.2032p^4(\log p)^4 \le N_{r-1} < 467.2512p^8(\log p)^4,\end{equation}
so we computer check these cases and apply Proposition \ref{WorstCaseI} otherwise.
\end{proof}

\begin{proof}[Proof of Theorem~\ref{theoretical}]
Let $g^*(p,N) = x_s$, so that $N \ge N_{s-1}$, and put $t = \max(r,s)$. Then 
\begin{equation} N_{t-1} \ge N_{r-1} \ge 29.2032 p^4 (\log p)^4 \end{equation}
so, by Proposition \ref{WorstCaseII}, $N_{t-1} \ge 12 x_t^2$. Now
\begin{equation} N \ge N_{t-1} \ge 12 x_t^2 \ge 12 x_s^2 = 12 g^*(p,N)^2.\end{equation}
\end{proof}

\subsection{Computation of $\hat N$ given $p$}

Henceforth, let $r$ be as in Theorem \ref{theoretical}, and assume GRH. Now that we have a theoretical upper bound for $\hat N$, it is not too difficult to write a program that, given $p$, will compute $\hat N$ exactly. Still, it would be awfully slow to test the inequality \eqref{comp} for every $N < N_{r-1}$, so we shall describe an economising manoeuvre based on the following observation:
\begin{lemma} \label{obs}
Let $t$ be a positive integer, and suppose that $N \ge 12x_t^2$ is such that the inequality~\eqref{comp} does not hold. Then $N_t$ divides $N$. 
\end{lemma}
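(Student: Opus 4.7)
The plan is to prove the contrapositive: assume that $N \ge 12 x_t^2$ and that $N_t \nmid N$, and deduce that the inequality \eqref{comp} does hold.

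First, I would unpack the hypothesis $N_t \nmid N$. Since $N_t = x_1 x_2 \cdots x_t$ is a product of distinct primes, $N_t \nmid N$ means there is some index $i \in \{1, 2, \ldots, t\}$ such that $x_i \nmid N$. Each $x_i$ is by definition a prime primitive root modulo $p$, so $x_i$ is a prime primitive root modulo $p$ that does not divide $N$. By the definition of $g^*(p, N)$ as the \emph{least} such prime, we obtain
\begin{equation}
g^*(p, N) \le x_i \le x_t.
\end{equation}

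Next, I would combine this with the standing assumption $N \ge 12 x_t^2$ to get
\begin{equation}
12 g^*(p, N)^2 \le 12 x_t^2 \le N \le N \prod_{\ell \mid N} \left(1 + \frac{1}{\ell}\right),
\end{equation}
where the last inequality is trivial since each factor on the right exceeds $1$. This is precisely inequality \eqref{comp}, contradicting the assumption that it fails. Hence $N_t \mid N$, as claimed.

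There is no serious obstacle here; the argument is essentially a tautological chase through the definitions of $g^*(p, N)$ and the sequence $(x_t)$, together with the elementary observation that the product $\prod_{\ell \mid N}(1 + 1/\ell)$ is at least $1$. The only point worth double-checking is the indexing convention, namely that $x_1, \ldots, x_t$ really are the $t$ smallest prime primitive roots mod $p$ (so that any $x_i$ with $i \le t$ satisfies $x_i \le x_t$), which is how the sequence was introduced at the start of Section~\ref{asymptotics}.
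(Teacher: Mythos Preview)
Your proof is correct and is essentially the same as the paper's: the paper argues directly that the failure of \eqref{comp} together with $N\ge 12x_t^2$ forces $g^*(p,N)>x_t$, hence each $x_1,\ldots,x_t$ divides $N$; you have simply written out the contrapositive of this in more detail.
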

\begin{proof} The hypotheses imply that $g^*(p,N) > x_t$, so $N_t$ divides $N$. \end{proof}

So we only need to test the inequality \eqref{comp} for $N \le 12 x_1^2$ and for multiples of $N_t$ in the range 
\begin{equation} \label{range} 
  \left[12x_t^2, 12x_{t+1}^2\right) 
\end{equation}
($t=1,2,\ldots,r-2$), since Lemma \ref{obs} and Theorem \ref{theoretical} imply that if $N \ge 12 x_{r-1}^2$ then the inequality \eqref{comp} holds. 

There is a reasonable upper bound \eqref{t1} for $x_{\varphi(p-1)}$, and hence for $x_1$, however in practice $x_1$ is very small. Moreover, for each $t$ there are very few (if any) multiples of $N_t$ in the range~\eqref{range}. Consequently, we have an extremely efficient method for determining $\hat N$ given $p$, and we could easily have done so for much larger $p$ than discussed below. By running the program  we conclude as follows:

\begin{prop} For $p \ge 5$, the inequality \eqref{comp} holds if $p < p^*$ and $N \ge N^*$ for the following pairs $(p^*,N^*)$: 
\begin{align*} &(4243,121424) \qquad &(2791,81550) \qquad &(691,48204) \qquad &(271,44158) \\
&(199,38858) \qquad &(151,24796) \qquad &(43,9049) \qquad &(19,5853).
\end{align*}
In particular, in any of these cases the bound in Theorem \ref{main} becomes
\begin{equation}
\frac{k_2}{12}N \prod_{\ell |N}\left(1+\frac1 \ell\right).
\end{equation}
\end{prop}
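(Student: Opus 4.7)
The plan is to combine the theoretical ceiling supplied by Theorem~\ref{theoretical} with the sparsification given by Lemma~\ref{obs}, and then verify the claim by a finite computation in Sage. Fix a prime $p$ with $5 \le p < p^*$ and let $r = r(p)$ be as in Theorem~\ref{theoretical}. By that theorem, inequality~\eqref{comp} holds for every $N \ge N_{r-1}$, so $\hat N(p) \le N_{r-1}$ and the problem of computing $\hat N(p)$ reduces to a finite search.

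First I would precompute, for the given $p$, the sorted list $x_1 < x_2 < \cdots < x_{r-1}$ of the smallest prime primitive roots modulo $p$, together with the partial products $N_t = x_1 \cdots x_t$. Lemma~\ref{obs} then tells me that the inequality~\eqref{comp} can fail for some $N \ge N^*$ only when either $N \le 12 x_1^2$, or $N$ is a multiple of $N_t$ lying in the interval $[12 x_t^2, 12 x_{t+1}^2)$ for some $t \in \{1, \ldots, r-2\}$. For each $p$ with $5 \le p < p^*$, my program would sweep through exactly this (very sparse) set of candidate $N$, test~\eqref{comp} using the explicit index formula~\eqref{ShimuraFormula}, and record the largest $N$ at which the inequality fails; adding $1$ to this value gives $\hat N(p)$.

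Having tabulated $\hat N(p)$ for all relevant $p$, I would verify for each of the eight pairs $(p^*, N^*)$ in the statement that
\begin{equation}
\max\{\hat N(p) : p \text{ prime},\ 5 \le p < p^*\} \le N^*,
\end{equation}
which is precisely the claim of the proposition. The ``in particular'' sentence then follows at once: under these hypotheses $g^*(p,N)^2$ is dominated by $\tfrac{1}{12}[SL_2(\bZ):\Gamma_0(N)]$, hence by $\tfrac{k_2}{12}[SL_2(\bZ):\Gamma_0(N)]$ since $k_2 \ge 1$, so the max in Theorem~\ref{main} collapses to its second argument, which by~\eqref{ShimuraFormula} equals $\tfrac{k_2}{12} N \prod_{\ell \mid N}(1 + 1/\ell)$.

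The main obstacle is not conceptual but one of bookkeeping and efficiency: as $p$ grows, $N_{r-1}$ can become enormous, so a naive test of every $N \le N_{r-1}$ is hopeless; correctly implementing the Lemma~\ref{obs} restriction (and in particular computing $x_t$ and $N_t$ on the fly, stopping as soon as $N_{t-1} \ge 29.2032 p^4 (\log p)^4$) is what makes the computation tractable up to $p^* = 4243$. A secondary obstacle is arithmetic care in the very small regime $N \le 12 x_1^2$, where~\eqref{comp} must be checked directly for each $N$ rather than only at multiples of $N_t$; since $x_1$ is invariably tiny, this loop is short but must not be omitted.
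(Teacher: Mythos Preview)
Your proposal is correct and follows essentially the same computational strategy as the paper: use Theorem~\ref{theoretical} to bound the search, then invoke Lemma~\ref{obs} to restrict attention to $N \le 12x_1^2$ and to multiples of $N_t$ in the windows $[12x_t^2,12x_{t+1}^2)$ for $t=1,\ldots,r-2$, and finally tabulate $\hat N(p)$ for all primes $5\le p<p^*$ and take the maximum. The paper in fact reports that each listed $N^*$ \emph{equals} this maximum (so the bounds are sharp), which is slightly more than the inequality you plan to verify, but for the proposition as stated your check suffices.
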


These are best possible bounds for $\hat N$, since for each $p$ we computed $\hat N$ exactly. One might wonder why $\hat N$ is so large. Indeed $g^*(p,N)$ is typically very small, however there are some values (small multiples of the $N_t$) for which $g^*(p,N)$ is somewhat large, which can mean that the inequality \eqref{comp} suddenly fails.

\section{The special cases $p=2$ and $p=3$}
\label{2and3}

As the considerations in this section are not crucial to the main point of the
paper, we do not recall here the algebro-geometric definition of modular
forms due to Deligne and Katz.  The interested reader is invited to
consult~\cite{Katz} or~\cite{Gross}.

For any prime $p$, the Hasse invariant $A_p$ is a Katz modular form (mod $p$)
of level one and weight $p-1$, with $q$-expansion
\begin{equation*}
  A_p(q)=1.
\end{equation*}
As recalled in Section~\ref{proof}, if $p\geq 5$ then $A_p$ can be obtained as
the reduction modulo $p$ of the Eisenstein series $E_{p-1}$.  We say that
$E_{p-1}$ is a lifting of $A_p$ to characteristic zero.  If $p<5$, we can
still lift $A_p$ to a form in characteristic zero, at the expense of
increasing the level.  We will use the following two results of Katz:
\begin{thm}[see~{\cite[Theorem 1.7.1]{Katz}}]
  \label{katz1}
  Let $k$ and $N$ be positive integers such that either ($k=1$ and $3\leq
  N\leq 11$) or ($k\geq 2$ and $N\geq 3$).  Let $p$ be a prime not dividing
  $N$.  Then every modular form (mod $p$) of weight $k$ and level $\Gamma(N)$
  can be lifted to characteristic zero.
\end{thm}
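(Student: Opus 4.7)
The plan is to realise Katz modular forms as sections of a line bundle on a moduli scheme, and then to use coherent cohomology on that moduli scheme to produce lifts.

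For $N\geq 3$ the moduli problem of elliptic curves equipped with a full level-$N$ structure is representable over $\bZ[1/N]$ by a smooth affine scheme $Y(N)$, which admits a canonical smooth projective compactification $X(N) \to \mathrm{Spec}\,\bZ[1/N]$. On $X(N)$ lives the Hodge line bundle $\omega$, obtained by extending $\pi_* \Omega^1_{\mathcal E/Y(N)}$ (with $\pi\colon \mathcal E\to Y(N)$ the universal elliptic curve) across the cusps. By construction, Katz modular forms of weight $k$ and level $\Gamma(N)$ over a $\bZ[1/N]$-algebra $R$ are the global sections $H^0(X(N)_R, \omega^{\otimes k})$.

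Fix a prime $p\nmid N$ and a mod $p$ form $\bar f \in H^0(X(N)_{\bF_p}, \omega^k)$. The short exact sequence
\begin{equation*}
0 \to \omega^k \xrightarrow{\cdot p} \omega^k \to \omega^k\otimes_{\bZ}\bF_p \to 0
\end{equation*}
on $X(N)_{\bZ_p}$ yields a long exact cohomology sequence whose connecting map places the obstruction to lifting $\bar f$ in $H^1(X(N)_{\bZ_p},\omega^k)[p]$. Hence the whole problem reduces to showing that $H^1(X(N)_{\bF_p}, \omega^k)=0$: cohomology and base change (or Nakayama) then forces $H^1(X(N)_{\bZ_p}, \omega^k) = 0$, which produces the desired lift.

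The cohomological vanishing is the crux. The Kodaira--Spencer isomorphism $\omega^{\otimes 2} \cong \Omega^1_{X(N)}(\text{cusps})$ gives $2\deg\omega = 2g(N)-2+d(N)$, where $g(N)$ denotes the genus of $X(N)$ and $d(N)$ the number of cusps, so Serre duality yields $H^1(X(N)_{\bF_p}, \omega^k) = 0$ whenever $k\deg\omega > 2g(N)-2$, equivalently $k\cdot d(N) > (2-k)(2g(N)-2)$. When $k\geq 2$ the right-hand side is non-positive and $d(N)\geq 1$, so the inequality holds for every $N\geq 3$. When $k=1$ it becomes $d(N) > 2g(N)-2$; substituting the classical formulas
\begin{equation*}
d(N) = \tfrac{N^2}{2}\prod_{\ell \mid N}\bigl(1 - \ell^{-2}\bigr), \qquad 2g(N)-2 = \tfrac{N^2(N-6)}{12}\prod_{\ell \mid N}\bigl(1-\ell^{-2}\bigr),
\end{equation*}
this collapses to $N<12$, matching the hypothesis $3\leq N\leq 11$ exactly. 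The main obstacle is the numerical bookkeeping in the weight-one case; the rest is formal manipulation of coherent cohomology on a smooth projective curve. The coprimality $p\nmid N$ is used only to guarantee that $X(N)$ is smooth over $\bZ_p$, which is what legitimises the base-change step.
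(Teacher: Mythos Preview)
The paper does not give its own proof of this statement: Theorem~\ref{katz1} is quoted verbatim from~\cite[Theorem 1.7.1]{Katz} and used as a black box in Section~\ref{2and3}. So there is nothing in the paper to compare against.

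That said, your sketch is correct and is in fact precisely Katz's argument. Identifying mod~$p$ forms with $H^0(X(N)_{\bF_p},\omega^{\otimes k})$, pushing the obstruction into $H^1$ via the multiplication-by-$p$ sequence, and killing $H^1$ by a degree count through Kodaira--Spencer and Serre duality is exactly how~\cite{Katz} proceeds. Your numerics are right: $2\deg\omega=2g(N)-2+d(N)$ gives the inequality $k\,d(N)>(2-k)(2g(N)-2)$, which for $k\ge 2$ is automatic and for $k=1$ reduces, via the classical formulas you wrote, to $N<12$. One small point worth making explicit is that $X(N)_{\bar\bF_p}$ is not connected (it has $\varphi(N)$ components indexed by the Weil pairing), but each component carries the same genus and cusp count, so your degree argument applies componentwise and the conclusion is unaffected. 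The base-change step is also fine: $H^2$ vanishes on a relative curve, so $H^1$ commutes with base change, and Nakayama then gives $H^1(X(N)_{\bZ_p},\omega^{\otimes k})=0$ from the fibral vanishing.
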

\begin{thm}[see~{\cite[Theorem 1.8.1]{Katz}}]
  \label{katz2}
  Let $k$ be a positive integer and let $p\neq 2$ be a prime.  Every modular
  form (mod $p$) of weight $k$ and level $\Gamma(2)$ can be lifted to
  characteristic zero.
\end{thm}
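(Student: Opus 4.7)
The plan is to adopt the algebro-geometric viewpoint and exploit the fact that the compactified modular curve $\overline{X}(2)$ is isomorphic to $\bP^1$ over $\bZ[1/2]$, which reduces the lifting problem to the trivial task of lifting polynomial coefficients from $\bF_p$ to $\bZ_p$.

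First I would set up the moduli-theoretic framework. Since $p \ne 2$, the moduli problem $[\Gamma(2)]$ parametrizing pairs $(E,\alpha)$ of an elliptic curve $E$ with a chosen basis of $E[2]$ is well-behaved over $\bZ[1/2]$; although it is not representable as a scheme (owing to the universal automorphism $-1$ of $E$), it is a smooth Deligne--Mumford stack whose coarse moduli space identifies with $\bP^1$ via the Legendre $\lambda$-parameter, with cusps at $\lambda = 0,1,\infty$. The Hodge bundle $\omega$ extends as a line bundle across these cusps in the standard way. In Katz's formalism, a modular form (mod $p$) of weight $k$ and level $\Gamma(2)$ is then a global section of $\omega^{\otimes k}$ on this compactified moduli space, base-changed to $\bF_p$.

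Next I would identify the relevant line bundle. The automorphism $-1$ of each elliptic curve acts on $\omega$ by $-1$, hence on $\omega^{\otimes k}$ by $(-1)^k$, so the pushforward of $\omega^{\otimes k}$ to the coarse moduli space vanishes when $k$ is odd (making the theorem vacuous in that case) and is isomorphic to $\mathcal{O}(d_k)$ on $\bP^1$ with $d_k = k/2$ when $k$ is even. The last assertion is easily checked against the classical presentation $M_*(\Gamma(2)) = \bC[x,y]$ with $x,y$ of weight $2$, since $\dim M_{2m}(\Gamma(2)) = m+1 = \dim H^0(\bP^1, \mathcal{O}(m))$.

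Finally I would carry out the lift. A global section of $\mathcal{O}(d_k)$ on $\bP^1_{\bF_p}$ is a homogeneous polynomial of degree $d_k$ in two variables with $\bF_p$-coefficients; lifting each coefficient arbitrarily to $\bZ_p$ yields a section of $\mathcal{O}(d_k)$ on $\bP^1_{\bZ_p}$, which is a characteristic-zero modular form whose reduction is the original one. Conceptually the lift exists because $H^1(\bP^1_{\bF_p}, \mathcal{O}(d_k)) = 0$ for $d_k \ge -1$, so the short exact sequence $0 \to \omega^{\otimes k} \xrightarrow{\cdot p} \omega^{\otimes k} \to \omega^{\otimes k}/p \to 0$ on a $\bZ_p$-lift of the curve yields a surjection on $H^0$. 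The main obstacle is the geometric setup in the first two steps: carefully establishing $\overline{X}(2) \cong \bP^1_{\bZ[1/2]}$ as stacks and verifying that $\omega$ extends as claimed across the cusps. Once this input is in hand, the vanishing of $H^1$ on $\bP^1$ makes the lifting essentially automatic, and the hypothesis $p \ne 2$ is precisely what ensures the moduli problem has good reduction at $p$.
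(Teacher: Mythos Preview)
The paper does not give its own proof of this statement: it is quoted verbatim as \cite[Theorem~1.8.1]{Katz} and used as a black box, and the authors explicitly say they will not recall the underlying algebro-geometric definitions. So there is nothing in the paper to compare your argument against.

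That said, your outline is essentially the standard argument and is close to what Katz does. He works directly with the Legendre family $y^2=x(x-1)(x-\lambda)$ over $\bZ[1/2][\lambda,(\lambda(\lambda-1))^{-1}]$ and identifies the graded ring of $\Gamma(2)$-modular forms over any $\bZ[1/2]$-algebra with a polynomial ring on two explicit weight-$2$ generators; surjectivity of reduction mod $p$ is then immediate from the freeness of each graded piece. Your reformulation via $\overline{X}(2)\cong\bP^1_{\bZ[1/2]}$, $\omega^{\otimes 2m}\cong\mathcal O(m)$, and $H^1(\bP^1,\mathcal O(m))=0$ is a clean cohomological repackaging of the same computation. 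The one place to be slightly careful is the passage from the stack to the coarse space: since $[\Gamma(2)]$ is a $\mu_2$-gerbe over its coarse $\bP^1$, the identification of sections of $\omega^{\otimes k}$ with sections of a line bundle on $\bP^1$ uses exactly the $(-1)^k$ observation you make, and one should check that this descent works integrally over $\bZ[1/2]$ and is compatible with the cuspidal extension. Once that is granted (and it is routine), your $H^1$-vanishing argument is correct and the hypothesis $p\neq 2$ enters exactly where you say it does.
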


\subsection{The case $p=3$}
\begin{itemize}
  \item If $N$ is a power of $3$, we can use Theorem~\ref{katz2} to lift $A_3$
    to $\tilde{A}_3$:
    \begin{equation*}
      \xymatrixcolsep{0pt}
      \xymatrix{
      A_3 & \in & M_2(SL_2(\bZ);\overline{\bF}_3)
      & \subset & 
      M_2(\Gamma(2);\overline{\bF}_3) \ar@{~>}[d]\\
      & & \tilde{A}_3 & \in & M_2(\Gamma(2);\overline{\bZ})
      & \subset &
      M_2(\Gamma_0(2),\text{triv};\overline{\bZ}).
    }
    \end{equation*}
    Going through the proof in
    Section~\ref{proof} with $E_{p-1}$ replaced by $\tilde{A}_3$, we have
    $f^\prime=\tilde{A}_3^r f\in
    M_{k_2}(\Gamma_0(2N),\chi;\overline{\bZ})$, so we must use the
    Sturm bound for $\Gamma_0(2N)$.  Therefore the inequality in
    Theorem~\ref{main} must be replaced by
    \begin{equation}
    n \le \max\left\{ g^*(p,N)^2, \frac{k_2}{12}[SL_2(\bZ):\Gamma_0(2N)]
    \right\}
    \end{equation}
  \item If $N$ is divisible by $2$, the same process as in the previous part
    gives us the lifting 
    $\tilde{A}_3\in M_2(\Gamma_0(2),\text{triv};\overline{\bZ})$.  However,
    since $2$ divides $N$, we obtain the exact same inequality as in
    Theorem~\ref{main}.
  \item If $N$ is divisible by a prime $p_0\notin\{2, 3\}$, we can use 
    Theorem~\ref{katz1} to lift 
    \begin{equation*}
      \xymatrixcolsep{0pt}
      \xymatrix{
      A_3 & \in & M_2(SL_2(\bZ);\overline{\bF}_3)
      & \subset & 
      M_2(\Gamma(p_0);\overline{\bF}_3) \ar@{~>}[d]\\
      & & \tilde{A}_3 & \in & M_2(\Gamma(p_0);\overline{\bZ})
      & \subset &
      M_2(\Gamma_0(p_0),\text{triv};\overline{\bZ}).
    }
    \end{equation*}
    Since $p_0$ divides $N$, we again obtain the same inequality as in 
    Theorem~\ref{main}.
\end{itemize}

\subsection{The case $p=2$}
\begin{itemize}
\item If $N$ is not
divisible by $5$, $7$ or $11$, use Theorem~\ref{katz2} to lift $A_2$ to
$\tilde{A}_2$:
\begin{equation*}
  \xymatrixcolsep{0pt}
  \xymatrix{
  A_2 & \in & M_1(SL_2(\bZ);\overline{\bF}_2)
  & \subset & 
  M_1(\Gamma(5);\overline{\bF}_2) \ar@{~>}[d]\\
  & & \tilde{A}_2 & \in & M_1(\Gamma(5);\overline{\bZ})
  & \subset &
  M_1(\Gamma_0(5),\text{triv};\overline{\bZ}).
}
\end{equation*}
The inequality in
Theorem~\ref{main} must then be replaced by
\begin{equation}
n \le \max\left\{ g^*(p,N)^2, \frac{k_2}{12}[SL_2(\bZ):\Gamma_0(5N)] \right\}
\end{equation}
\item If $N$ is divisible by $p_0\in\{5, 7, 11\}$, use Theorem~\ref{katz1}
  to lift
  \begin{equation*}
    \xymatrixcolsep{0pt}
    \xymatrix{
    A_2 & \in & M_1(SL_2(\bZ);\overline{\bF}_2)
    & \subset & 
    M_1(\Gamma(p_0);\overline{\bF}_2) \ar@{~>}[d]\\
    & & \tilde{A}_2 & \in & M_1(\Gamma(p_0);\overline{\bZ})
    & \subset &
    M_1(\Gamma_0(p_0),\text{triv};\overline{\bZ}).
  }
  \end{equation*}
  Since $p_0$ divides $N$, we
  get the same inequality as in Theorem~\ref{main}.
\end{itemize}

We summarise our findings in Table~\ref{table:ineq}.

\begin{table}[h]
\begin{tabular}{lll}
  \toprule
  Prime & Level & Inequality in Theorem~\ref{main}\\
  \midrule
  $p\geq 5$ & $N\geq 1$ \\
  $p=3$ & $N\neq 3^a$, some $a$ &
    $n \le \max\left\{ g^*(p,N)^2, \frac{k_2}{12}[SL_2(\bZ):\Gamma_0(N)]
    \right\}$
    \\
  $p=2$ & $N$ divisible by $5$, $7$ or $11$\\
  \midrule
  $p=3$ & $N=3^a$, some $a$ & 
    $n \le \max\left\{ g^*(p,N)^2, \frac{k_2}{12}[SL_2(\bZ):\Gamma_0(2N)]
    \right\}$
  \\
  \midrule
  $p=2$ & $N$ not divisible by $5$, $7$ or $11$ &
    $n \le \max\left\{ g^*(p,N)^2, \frac{k_2}{12}[SL_2(\bZ):\Gamma_0(5N)]
    \right\}$\\
  \bottomrule
\end{tabular}
\caption{Inequalities obtained for the various combinations of $p$ and $N$}
\label{table:ineq}
\end{table}

\end{document}